\newtheorem{theorem}{Theorem}[section]
\newtheorem{proposition}[theorem]{Proposition}
\newtheorem{lemma}[theorem]{Lemma}
\newtheorem{corollary}[theorem]{Corollary}
\theoremstyle{definition}
\newtheorem{example}[theorem]{Example}
\newtheorem{examples}[theorem]{Examples}
\newtheorem{contrexample}[theorem]{\bf Contrexample}
\newtheorem{dfn}[theorem]{Definition}
\newcommand{\m}{^{\times}}
\newcommand{\inv}{^{-1}}
\newcommand{\id}{\mathrm{id}}
\newcommand{\ind}{\operatorname{ind}}
\newcommand{\res}{\operatorname{res}}
\newcommand{\cor}{\operatorname{cor}}
\newcommand{\Trd}{\operatorname{Trd}}
\newcommand{\ord}{\operatorname{ord}}
\newcommand{\Z}{\mathbb{Z}}
\newcommand{\Q}{\mathbb{Q}}
\newcommand{\cA}{\mathcal A}
\newcommand{\cB}{\mathcal B}
\newcommand{\cH}{\mathcal H}
\newcommand{\cM}{\mathcal M}
\numberwithin{equation}{section}
\title{Power-central elements in tensor products of symbol algebras}
\date{}
\author[D. Barry]{Demba  Barry}
\address{ICTEAM Institute, Universit\'e catholique de Louvain, B-1348 Louvain-la-Neuve, Belgium}
\email{barry.demba@gmail.com}
\begin{document}
\begin{abstract}
Let $A$ be a central simple algebra over a field $F$. Let $k_1,\ldots, k_r$ be cyclic extensions of $F$  such that $k_1\otimes_F\cdots \otimes_F k_r$ is a field. We investigate conditions under which $A$ is a tensor product of symbol algebras where each $k_i$ is in a symbol $F$-algebra factor of the  same degree as $k_i$. As an application, we give an example of an indecomposable algebra of degree $8$ and exponent $2$ over a  field of $2$-cohomological dimension $4$.

\bigskip
\bigskip
\noindent \textbf{Keywords}\quad Central simple algebra. Symbol algebra, Armature, Valuation, Cohomological dimension

\bigskip
\noindent \textbf{Mathematics Subject Classification (2010)}\quad  16K20, 16W60, 12E15, 12G10, 16K50

\end{abstract}
\maketitle
%\tableofcontents
\section{Introduction}
Let $F$ be a field and let $k$ be a Galois extension of $F$ of degree $n$ with cyclic group generated by $\sigma$. For $a\in F\m$, we let $(k,\sigma, a)$ denote the cyclic $F$-algebra generated over $k$ by a single element $y$ with defining relation $ycy\inv=\sigma(c)$ for $c\in k$ and $y^n=a$. If $F$ contains a primitive $n$-th root of unity $\zeta$, it follows from Kummer theory that one may write $k$ in the form $k=F(\sqrt[n]{b})$ for some $b\in F\m$. The algebra $(k, \sigma, a)$ is then isomorphic to the \emph{symbol} algebra $(a,b)_n$ over $F$, that is, a central simple $F$-algebra generated  by two elements  $i$ and $j$ satisfying   $i^n=a$,  $j^n = b$ and $ij=\zeta ji$ (see for instance \cite[\S 15. 4]{Pie82}). In the case $n=2, \zeta=-1$, one gets a quaternion algebra over $F$ that will be denoted $(a, b)$.

A central division algebra decomposes into a tensor product of symbol algebras (of degree $2$ in \cite{ART79} and of degree an arbitrary prime $p$ in \cite{Row82}) if and only if it contains a set whose elements satisfy some commuting properties: \emph{$q$-generating set} in \cite{ART79}, \emph{$p$-central set} in \cite{Row82}, and a set of representatives  of an \emph{armature} in \cite{Tig82}. Our approach  is based on these notions. 

The main goal of this paper is to further investigate the decomposability of central simple algebras; the study of power central-elements is a constant tool. Let $A$ be a central simple algebra over $F$ and let $k_1,\ldots, k_r$ be cyclic extensions of $F$, of respective degree $n_1,\ldots, n_r$, contained in $A$ such that $k_1\otimes_F \cdots \otimes_F k_r$ is a field. It is natural to ask: \emph{when does there exist a decomposition of $A$ into a tensor product of symbol algebras in which each  $k_i$ is in a symbol $F$-subalgebra factor of $A$ of degree $n_i$? }  Starting from $A$, we construct a division algebra $E$ whose center is the iterated Laurent series with $r$ indeterminates over $F$ and show that $A$ admits such a decomposition if and only if $E$ is a tensor product of symbol algebras (see Theorem \ref{thm3.1} and Corollary \ref{thm3.2} for details). Note that  Corollary \ref{thm3.2} is very close to a result of Tignol \cite[Prop. 2.10]{Tig87}. In contrast with  Tignol's result (which is stated in terms of Brauer equivalence and only for prime exponent), Corollary \ref{thm3.2} is stated in terms of isomorphism classes and is valid for any exponent. Moreover, our approach  is completely different. We will give an example, pointed out by Merkurjev, of a division algebra which is a tensor product of three quaternion algebras, and containing a quadratic field extension which is in no quaternion subalgebra (Corollary \ref{cor3.1}). Using valuation theory, we give a general  method for constructing  tensor products of quaternion algebras containing a quadratic field extension which is in no quaternion subalgebra. As an application, let $A$ be a central simple algebra of degree $8$ and exponent $2$ over $F$, and containing a quadratic field extension which is in no quaternion subalgebra.   We use Corollary \ref{thm3.2} to associate with $A$ an example of an indecomposable algebra of degree $8$ and exponent $2$ over a field of rational functions in one variable over a field of $2$-cohomological dimension $3$ (see Theorem \ref{application}).  This latter field is obtained by an inductive process pioneered by Merkurjev \cite{Mer92}.

We next recall some results related to our main question: let $A$ be a $2$-power dimensional central simple algebra over $F$ and  let $F(\sqrt{d_1}, \sqrt{d_2})\subset A$ be a biquadratic field extension of $F$. If $A$ is a biquaternion algebra,  it follows from a result of Albert that $A\simeq (d_1, d_1')\otimes_F (d_2, d_2')$ where $d_1', d_2'\in F\m$ (see for instance \cite{Rac74}). As observed above, this is not true anymore in higher degree.  More generally, if $A$ is  of degree $8$ and exponent $2$ and $F(\sqrt{d})\subset A$ is a quadratic field extension, there exists  a cohomological criterion associated with the centralizer of $F(\sqrt{d})$ in $A$ which determines whether $F(\sqrt{d})$ lies in a quaternion $F$-subalgebra   of $A$ (see \cite[Prop. 4.4]{Bar}).  In the particular case where the $2$-cohomological dimension of $F$ is $2$ and $A$ is a division algebra of exponent $2$ over $F$, the situation is more favorable: it is shown in \cite[Thm. 3.3]{Bar} that  there  exits a decomposition of $A$ into a tensor product of quaternion $F$-algebras in which each $F(\sqrt{d_i})$ (for $i=1,2$)   is in a quaternion $F$-subalgebra.  
 
An outline of this article is the following: in Section \ref{sect2} we collect from \cite{Tig82} and \cite{TW87} some results on armatures of algebras  that will be used in the proofs of the main results. Section \ref{sect3} is devoted to the statements and the proofs of the main results. The particular case of exponent $2$ is analyzed (in more details) in Section \ref{sect4}.
 
All  algebras considered in this paper  are associative and finite-dimensional over their center. A central simple algebra $A$ over a field $F$ is \emph{decomposable} if $A \simeq A_1 \otimes_F A_2$ for two central simple $F$-algebras $A_1$ and $A_2$ both non isomorphic to $F$; otherwise A is called \emph{indecomposable}.  

Throughout this article, we shall use freely the standard terminology and notation from the theory of finite-dimensional algebras and the theory of valuations on division algebras. For these, as well as background information, we refer the reader to Pierce's book \cite{Pie82}.
\section{Armatures of algebras}\label{sect2}
Armatures in central simple algebras are a major tool for the next section. The goal of this section is to recall the notion of an armature and gather some preliminary results that will be used in the sequel.

We write $|H|$ for the cardinality of a set $H$. Let $A$ be a central simple $F$-algebra. For $a\in A^{\times}/F^{\times}$, we fix an element  $x_a$ of $A$ whose image in $A^{\times}/F^{\times}$ is $a$, that is, $a=x_aF^{\times}$. For a finite subgroup  $\cA$ of $A^{\times}/F^{\times}$, 
\[
F[\cA]= \Big\{\sum_{a\in\cA}c_ax_a\,|\, c_a\in F \Big\}
\]
denotes the $F$-subspace of $A$ generated by $\{x_a\,|\, a\in \cA\}$. Note that this subspace is independent of the choice of representatives $x_a$ for $a\in\cA$. Since $\cA$ is a group, $F[\cA]$ is  the subalgebra of $A$ generated by $\{x_a\,|\, a\in \cA\}$. As was observed in \cite{Tig82}, if $\cA$ is a finite abelian subgroup of $A^{\times}/F^{\times}$ there is an associated pairing $\langle\,,\,\rangle$ on $\cA\times\cA$ defined by 
\[
\langle a,b\rangle=x_ax_bx^{-1}_ax_b^{-1}.
\]
This definition is independent of the choice of representatives $x_a, x_b$ for $a,b$ and $\langle a,b\rangle$ belongs to $F^{\times}$ as $\cA$ is abelian.  Hence, $\langle a,b\rangle$ is central in $A$, and it follows that the pairing $\langle\,,\,\rangle$ is bimultiplicative. It is also alternating, obviously. Thus, as $\cA$ is finite, the image of $\langle\,,\,\rangle$ is a finite subset of $\mu(F)$ (where $\mu(F)$ denotes the group of roots of unity of $F$). For any subgroup $\cH$ of $\cA$ let 
\[
\cH^{\perp}=\{a\in \cA\,|\,\, \langle a,h\rangle =1 \,\text{ for all } h\in \cH\},
\]
a subgroup of $\cA$. The subgroup $\cH^{\perp}$ is called the \emph{orthogonal} of $\cH$ with respect to $\langle\,,\,\rangle$. The \emph{radical}\index{radical} of $\cA$, rad$(\cA)$, is defined to be $\cA^{\perp}$. The pairing $\langle\,,\,\rangle$ is called \emph{nondegenerate}\index{nondegenerate} on $\cA$ if $\text{rad}(\cA)=\{1_{\cA}\}$. 

For $g\in \cA$, we denote by $(g)$ the cyclic subgroup of $\cA$ generated by $g$ . The set $\{g_1,\ldots,g_r\}$ is called a base of $\cA$ if $\cA$ is the internal direct product 
\[ 
\cA=(g_1)\times\cdots\times (g_r).
\]
If $\langle\,,\,\rangle$ is nondegenerate then $\cA$ has a \emph{symplectic base}\index{symplectic base} with respect to $\langle\,,\,\rangle$, i.e, a base $\{g_1,h_1,\ldots, g_n,h_n\}$ such that for all $i,j$
\[
\langle g_i,h_i\rangle= c_i,\text{ where } \ord(g_i)=\ord(h_i)=\ord(c_i)
\]
\[
\langle g_i,g_j\rangle=\langle h_i,h_j\rangle=1 \text{ and, if } i\ne j,\,\, \langle g_i,h_j\rangle=1
\]
(see \cite[1.8]{Tig82}).
\begin{dfn}
For any finite-dimensional $F$-algebra $A$, a subgroup $\cA$ of $A^{\times}/F^{\times}$ is an \emph{armature}\index{armature} of $A$ if $\cA$ is abelian, $|\cA|=\dim_F A$, and $F[\cA]=A$.
\end{dfn}
If $\cA=\{a_1,\ldots, a_n\}$ is an armature of $A$, the above definition shows that the set $\{x_{a_1},\ldots,x_{a_n}\}$ is an $F$-base of $A$. The notion of an armature was introduced by Tignol in \cite{Tig82} for division algebras. The definition given here, slightly different from that given in \cite{Tig82}, comes from \cite{TW87}. This definition allows armatures in algebras other than division algebras. The following examples will be used repeatedly in the next section. 
\begin{examples}\label{ex3.1}
(a) (\cite{Tig82}) Let $A=A_1\otimes_F\cdots\otimes_F A_r$ be a tensor product of symbol $F$-algebras where $A_k$ is a symbol subalgebra of degree $n_k$. Suppose $F$ contains a primitive $n_k$-th root $\zeta_k$ of unity for $k=1,\ldots,r$. So, $A_k$ is isomorphic to a symbol algebra $(a_k,b_k)_{\zeta_k}$ of degree $n_k$ for some $a_k, b_k\in F^\times$. For each $k$, let $i_k,j_k$ be a symbol generator of $(a_k,b_k)_{\zeta_k}$. The image $\cA$ in $A^{\times}/F^{\times}$ of the set 
\[
\{i_1^{\alpha_1}j_1^{\beta_1}\ldots i_r^{\alpha_r}j_r^{\beta_r}\,|\,\, 0\le \alpha_k,\beta_k\le n_k-1\}
\]
is an armature of $A$ isomorphic to $(\Z/n_1\Z)^2\times\cdots\times (\Z/n_r\Z)^2$. We observe that for all $1\ne a\in\cA$ there exists $b\in \cA$ such that $\langle a,b\rangle\ne 1$; that is the pairing $\langle\,,\,\rangle$ is nondegenerate on $\cA$. Furthermore, $\{i_1F^\times, j_1F^\times,\ldots, i_rF^\times, j_rF^\times\}$ is a symplectic base of $\cA$. 

\vspace{0.2cm}
(b) Let $M$ be a finite abelian extension of a field $F$ and let $G$ be the Galois group of $M$ over $F$. Let $\ell$ be the exponent of $G$. If $F$ contains an $\ell$-th primitive root of unity, the extension $M/F$ is called a \emph{Kummer extension}\index{Kummer extension}. Let
\[
S=\{x\in M^{\times}\,\,|\,\, x^\ell\in F^{\times}\} \quad \text{and} \quad \text{\sf Kum}(M/F)= S/F^{\times}.
\]
It follows from Kummer theory (see for instance \cite[p.119-123]{Jac64}) that $\text{\sf Kum}(M/F)$ is a subgroup of $M^{\times}/F^{\times}$ and is dual to $G$ by the nondegenerate Kummer pairing $G\times \text{\sf Kum}(M/F)\to \mu(F) $ given by $(\sigma, b)=\sigma(x_b)x_b\inv$, for $\sigma\in G$ and $b\in \text{\sf Kum}(M/F)$.  Whence, $\text{\sf Kum}(M/F)$  is isomorphic (not canonically in general) to $G$. As observed in \cite[Ex. 2.4]{TW87}, the subgroup $\text{\sf Kum}(M/F)$ is the only armature of $M$ with exponent dividing $\ell$.
\end{examples}
Let $\cA$ be an armature of a central simple $F$-algebra $A$ and let $\{a_1,b_1,\ldots, a_n,b_n\}$ be a symplectic base of $\cA$ with respect to $\langle\,,\,\rangle$. We shall denote by $ F[(a_k)\times(b_k)]$ the subalgebra of $A$ generated by the representatives $x_{a_k}$ and $x_{b_k}$ of $a_k$ and $b_k$.  It is clear that $F[(a_k)\times(b_k)]$  is a symbol subalgebra of $A$ of degree $n_k=\ord(a_k)=\ord(b_k)$ and generated by  $x_{a_k}$ and $x_{b_k}$. It is shown in \cite[Lemma 2.5]{TW87} that $A\simeq F[(a_1)\times(b_1)]\otimes_F\cdots\otimes_F F[(a_n)\times(b_n)]$.

\vspace{0.2cm}
Actually, the notion of an armature is a generalization and refinement of the notion of a quaternion generating set ($q$-generating set) introduced in  \cite{ART79}. Indeed, a central simple algebra $A$ over $F$  has an armature if and only if $A$ is isomorphic to a tensor product of symbol algebras over $F$ (see \cite[Prop. 2.7]{TW87}).

\vspace{0.2cm}
Note that if the exponent of $\cA$ is a prime $p$, we may consider $\cA$ as a vector space over the field with $p$ elements $\mathbb F_p$. Identifying the group of $p$-th roots of unity with $\mathbb F_p$ by a choice of a primitive $p$-th root of unity, we may suppose the pairing has  values in $\mathbb F_p$. So, two elements $a,b\in\cA$ are orthogonal if and only if $\langle a,b\rangle=0$.  We need the following proposition:
\begin{proposition}\label{prop3.1}
Let $V$ be a vector space over $\mathbb F_p$ of dimension $2n$ and let $\langle\,,\,\rangle$ be a nondegenerate alternating pairing on $V$. Let $\{e_1,\ldots,e_r\}$ be a base of a totally isotropic subspace of $V$ with respect to $\langle\,,\,\rangle$. There are $f_1, \ldots,f_r,e_{r+1},f_{r+1}, \ldots,e_n, f_n$ in $V$ such that $\{e_1,f_1,\ldots,e_n,f_n\}$ is a symplectic base of $V$.
\end{proposition}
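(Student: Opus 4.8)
The plan is to induct on $n$, where $2n=\dim_{\mathbb F_p}V$, allowing $r$ to range over $0,\dots,n$. The base case $n=0$ is vacuous, and the case $r=0$ reduces immediately to $r\ge 1$: choose any nonzero $e_1\in V$, which spans a totally isotropic line since $\langle\,,\,\rangle$ is alternating, and apply the case $r=1$ (a symplectic base extending $\{e_1\}$ is in particular a symplectic base of $V$, which is all that is asked when $r=0$). So I assume $n\ge 1$ and $r\ge 1$.

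The one idea that makes the induction run is the choice of $f_1$. Rather than merely picking some vector pairing nontrivially with $e_1$, I choose $f_1\in V$ with $\langle e_1,f_1\rangle=1$ \emph{and} $\langle e_i,f_1\rangle=1$-trivial, i.e. $\langle e_i,f_1\rangle=0$, for $2\le i\le r$. Such an $f_1$ exists because the linear functionals $\langle e_1,-\rangle,\dots,\langle e_r,-\rangle$ on $V$ are linearly independent: a relation $\sum_i\lambda_i\langle e_i,-\rangle=0$ would put $\sum_i\lambda_i e_i$ in the radical of $\langle\,,\,\rangle$, which is $\{0\}$ by nondegeneracy, forcing all $\lambda_i=0$ since the $e_i$ are independent. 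Hence $v\mapsto(\langle e_1,v\rangle,\dots,\langle e_r,v\rangle)$ is a surjection $V\to\mathbb F_p^{\,r}$, and I take $f_1$ in the fibre over $(1,0,\dots,0)$.

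Now set $H=\mathbb F_p e_1\oplus\mathbb F_p f_1$. Since $\langle e_1,e_1\rangle=\langle f_1,f_1\rangle=0$ and $\langle e_1,f_1\rangle=1$, the restriction of $\langle\,,\,\rangle$ to $H$ is nondegenerate, so $V=H\perp H^{\perp}$ with $\langle\,,\,\rangle|_{H^{\perp}}$ nondegenerate and $\dim_{\mathbb F_p}H^{\perp}=2(n-1)$. By the choice of $f_1$ together with the isotropy of $\{e_1,\dots,e_r\}$, the vectors $e_2,\dots,e_r$ all lie in $H^{\perp}$; they remain linearly independent and span a totally isotropic subspace there. Applying the induction hypothesis inside $H^{\perp}$ produces $f_2,e_3,f_3,\dots,e_n,f_n\in H^{\perp}$ completing $\{e_2,\dots,e_r\}$ to a symplectic base $\{e_2,f_2,\dots,e_n,f_n\}$ of $H^{\perp}$ (after rescaling each $f_i$, if needed, to get $\langle e_i,f_i\rangle=1$). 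Then $\{e_1,f_1,e_2,f_2,\dots,e_n,f_n\}$ is a base of $V=H\oplus H^{\perp}$; the symplectic relations hold within $H$ and within $H^{\perp}$ by construction, and every pairing between a vector of $H$ and a vector of $H^{\perp}$ vanishes because $H\perp H^{\perp}$. This is the desired symplectic base of $V$ extending $\{e_1,\dots,e_r\}$.

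I expect the only delicate point to be precisely this first choice of $f_1$: the naive choice of any $f_1$ with $\langle e_1,f_1\rangle\ne 0$ need not keep $e_2,\dots,e_r$ inside $H^{\perp}$, and then the induction stalls. Imposing that $f_1$ also be orthogonal to $e_2,\dots,e_r$ costs nothing, by the independence argument above, and after that the proof is pure bookkeeping with orthogonal direct sums. (Alternatively, one could extract from this argument the construction of $f_1,\dots,f_r$ which together with the $e_i$ span a nondegenerate $2r$-dimensional subspace $W$, and then invoke the existence of a symplectic base of $W^{\perp}$ as in \cite[1.8]{Tig82}; the proposition is a mild refinement of that existence statement.)
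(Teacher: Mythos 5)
Your proof is correct and takes essentially the same approach as the paper's: induct and split off the hyperbolic plane spanned by $e_1$ and a suitably chosen $f_1$ that pairs nontrivially with $e_1$ while remaining orthogonal to $e_2,\ldots,e_r$, so that $e_2,\ldots,e_r$ drop into the orthogonal complement. The only cosmetic difference is how you produce such an $f_1$: the paper passes to the induced nondegenerate pairing on $W^{\perp}/W$ with $W=\mathrm{span}(e_2,\ldots,e_r)$, whereas you argue that the functionals $\langle e_i,-\rangle$ are linearly independent and hence $v\mapsto(\langle e_1,v\rangle,\ldots,\langle e_r,v\rangle)$ surjects onto $\mathbb F_p^{\,r}$ --- two phrasings of the same fact.
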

\begin{proof}
We argue by induction on the dimension of the totally isotropic subspace spanned by $e_1,\ldots,e_r$. If $r=1$, since the pairing is nondegenerate, there is $f_1\in V$ such that $\langle e_1,f_1\rangle\ne 0$ . Denote by $U=\text{span}(e_1,f_1)$ the subspace spanned by $e_1,f_1$. We have $V=U\perp U^{\perp}$ since the restriction of $\langle\,,\,\rangle$ to $U$ is nondegenerate. We  take for $\{e_2, f_2,\ldots, e_n, f_n\}$ a symplectic base of $U^{\perp}$.

Assume the statement for a totally isotropic subspace of dimension $r-1$. Let $W=\text{span}(e_2,\ldots,e_r)$; we have $W\subset W^{\perp}$. First, we  find $f_1\in W^{\perp}$ such that $\langle e_1,f_1\rangle\ne 0$. For this, consider the induced pairing, also denoted by $\langle\,,\,\rangle$, on $W^{\perp}/W$ defined by $\langle x+W,y+W\rangle=\langle x,y\rangle$ for $x,y\in W^{\perp}$. It is well-defined, and nondegenerate since $(W^{\perp})^{\perp}=W$. The element $e_1+W$ being non-zero in $W^{\perp}/W$, there is $f_1+W\in W^{\perp}/W$ such that $0\ne\langle e_1,f_1\rangle =\langle e_1+W,f_1+W\rangle$.  Letting $U=\text{span}(e_1,f_1)$, we have $V=U\perp U^{\perp}$ and $e_2,\ldots,e_r\in U^{\perp}$. Induction yields $f_2,\ldots, f_r, e_{r+1}, f_{r+1},\ldots, e_n, f_n\in U^{\perp}$ such that $\{e_2, f_2,\ldots, e_n, f_n\}$ is a symplectic base of $U^{\perp}$. Then $\{e_1, f_1,\ldots, e_n, f_n\}$ is a symplectic base of $V$.
\end{proof}

%%%%%%%%%%%
\section{Decomposability}\label{decomposability}\label{sect3}
Let $A$ be a central simple algebra over $F$ and let $t_1,\ldots,t_r$ be independent indeterminates over $F$. For $i=1,\ldots,r$, let $k_i$ be a cyclic extension of $F$ of degree $n_i$  contained in  $A$. We assume that $M=k_1\otimes_F\cdots \otimes_F k_r$ is a field and denote by $G$ the Galois group of $M$ over $F$. So $[M:F]=n_1\ldots n_r$ and $G=\langle \sigma_1\rangle\times\cdots \times\langle \sigma_r\rangle$, where $\langle \sigma_i\rangle$ is the Galois group of $k_i$ over $F$, and the order of $G$ is $n_1\ldots n_r$. Every element $\sigma\in G$ can be expressed as $\sigma=\sigma_1^{m_1}\ldots \sigma_r^{m_r} $ ($0\le m_i< n_i$). We shall denote by $C=C_AM$ the centralizer of $M$ in $A$. Let $t_1,\ldots, t_r$ be independent indeterminates over $F$. Consider the fields
\[
L'=F(t_1, \ldots, t_r) \quad \text{ and }\quad L= F((t_1))\ldots((t_r))
\]
and the following central simple algebras over $L'$ and $L$ respectively
\[
N'=(k_1\otimes_FL', \sigma_1\otimes\text{id},t_1)\otimes_{L'}\cdots\otimes_{L'}(k_r\otimes_FL', \sigma_r\otimes\text{id},t_r)
\]
and
\[
N=(k_1\otimes_FL, \sigma_1\otimes\text{id},t_1)\otimes_L\cdots\otimes_L(k_r\otimes_FL, \sigma_r\otimes\text{id},t_r).
\]
 We let
\[
R'=A\otimes_F N'\quad \text{ and }\quad R=A\otimes_F N.
\]
In this section our goal is to prove the following results:
\begin{theorem}\label{thm3.1}
Let $\cM$ be a finite group with a nondegenerate alternating pairing $\cM\times \cM \to \mu(F)$. Suppose $C$ is a division subalgebra of $A$, and $F$ contains a primitive $\exp(\cM)$-th root of unity. Then, the following are equivalent:
\begin{enumerate}
\item[(i)] The division algebra Brauer equivalent to $R'$ (respectively $R$)  has an armature isomorphic to $\cM$.
\item[(ii)] The algebra $A$ has an armature  isomorphic to $\cM$ and containing $\text{\sf Kum}(M/F)$ as a totally isotropic subgroup.
\end{enumerate} 
\end{theorem}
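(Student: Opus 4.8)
The plan is to recast everything in valuation theory. Give $L=F((t_1))\cdots((t_r))$ its iterated Laurent valuation $v$: it is Henselian, with residue field $F$ and value group $\Z^r$, and since $F$ contains a primitive $\exp(\cM)$-th root of unity the integer $\exp(\cM)$ is prime to $\cchar F$, so all the division algebras in sight are tame. First I would identify the division algebra $D$ Brauer-equivalent to $R$ (the case of $R'$ reduces to this after scalar extension to $L$, the index being unchanged). Each factor $(k_i\otimes_FL,\sigma_i\otimes\id,t_i)$ is semiramified with residue field $k_i$ and $i$-th ramification direction $\langle\sigma_i\rangle$, so $N$ is semiramified with residue field $M$ and a canonical isomorphism $\Gamma_N/\Gamma_L\cong G$; as $A\otimes_FL$ is inertially split with residue algebra $A$, the algebra $R=A\otimes_FN$ is inertially split, its residue field is $M$, the canonical map $\Gamma_D/\Gamma_L\to\Gal(M/F)=G$ is an isomorphism, and the residue division algebra of $D$ is the one underlying $A\otimes_FM$. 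By the double centralizer theorem $A\otimes_FM$ is Brauer-equivalent to $C$ (up to an anti-isomorphism, which plays no role below), and since $C$ is assumed to be a division algebra the residue of $D$ is exactly $C$.

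For (ii)$\Rightarrow$(i): let $\cA\cong\cM$ be an armature of $A$ containing $\text{\sf Kum}(M/F)$ as a totally isotropic subgroup. Write $k_i=F(\sqrt[n_i]{b_i})$; the classes $e_i$ of the $\sqrt[n_i]{b_i}$ form a base of $\text{\sf Kum}(M/F)$. Using Proposition \ref{prop3.1} (applied to the primary components) I extend $\{e_1,\ldots,e_r\}$ to a symplectic base $\{e_1,f_1,\ldots,e_m,f_m\}$ of $\cA$, and, replacing each $f_i$ ($i\le r$) by a suitable power, arrange that $\langle e_i,f_i\rangle$ equals the primitive $n_i$-th root of unity $\zeta_i$ defined by $\sigma_i(\sqrt[n_i]{b_i})=\zeta_i\sqrt[n_i]{b_i}$. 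Then $A=\bigotimes_{i=1}^m F[(e_i)\times(f_i)]$, the first $r$ factors being symbol algebras $(b_i,\beta_i)_{n_i}$ containing $k_i$; over $L$, bilinearity of symbols gives
\[
F[(e_i)\times(f_i)]\otimes_F(k_i\otimes_FL,\sigma_i\otimes\id,t_i)\ \sim\ (b_i,\beta_it_i)_{n_i},
\]
a semiramified division algebra with residue field $k_i$. Hence $R\sim N''\otimes_L(B\otimes_FL)$ with $N''=\bigotimes_{i=1}^r(b_i,\beta_it_i)_{n_i}$ semiramified of residue field $M$ and $B=\bigotimes_{i=r+1}^m F[(e_i)\times(f_i)]$ a central simple $F$-algebra. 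This inertially split algebra has residue $B\otimes_FM\sim C$, and since $\deg B=\deg A/[M:F]=\deg C=\ind C$, the algebra $B\otimes_FM$ is a division algebra; therefore $N''\otimes_L(B\otimes_FL)$ is division and equals $D$. Being a tensor product of symbol $L$-algebras, $D$ has an armature (Examples \ref{ex3.1}(a)) which, once the pairings are matched by our choice of the $f_i$, is isomorphic to $\cM$. The same computation over $F(t_1,\ldots,t_r)$ (or its Henselization) settles the case of $R'$.

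For (i)$\Rightarrow$(ii): it suffices to treat $D$, since an armature of $D_{R'}$ scalar-extends to one of $D$ of the same type. Given an armature $\cA\cong\cM$ of the tame valued division algebra $D$, the theory of armatures of valued division algebras (\cite{TW87}) lets one choose homogeneous representatives; then $a\mapsto v(x_a)+\Gamma_L$ is a homomorphism $\cA\to\Gamma_D/\Gamma_L\cong G$, surjective because $\Gamma_D$ is generated by $\Gamma_L$ and the values $v(x_a)$, and its kernel $\cA_0$ consists (after rescaling) of units, with $L[\cA_0]$ the inertial lift of the residue $C$ and the residues of the $x_a$ ($a\in\cA_0$) forming an armature $\overline{\cA_0}$ of $C$. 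Moreover $\mathrm{rad}(\overline{\cA_0})$ is an armature of $Z(C)=M$, hence equals $\text{\sf Kum}(M/F)$ (Examples \ref{ex3.1}(b)), and any $x_a$ with $v(x_a)\mapsto\sigma_i$ induces $\sigma_i$ on $M$. Choosing such representatives over $\sigma_1,\ldots,\sigma_r$ and correcting them by elements of $\cA_0$, one obtains inside $\cA$ a totally isotropic subgroup mapping isomorphically onto $G$; splitting it off reverses the regrouping above and yields an isomorphism of $L$-algebras $D\cong B'\otimes_F\bigotimes_{i=1}^r(b_i,c_i)_{n_i}\otimes_FN$ with $B'$ a tensor product of symbol $F$-algebras. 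Cancelling $N$ and using $\Br(F)\hookrightarrow\Br(L)$ gives $[A]=[B'\otimes_F\bigotimes_i(b_i,c_i)_{n_i}]$ in $\Br(F)$; as both sides have degree $\deg A$ they are isomorphic, so $A$ is a tensor product of symbol $F$-algebras. Its armature (Examples \ref{ex3.1}(a)) contains the totally isotropic subgroup generated by the classes of the $\sqrt[n_i]{b_i}$, which is $\text{\sf Kum}(M/F)$, and is isomorphic to $\cM$; this is (ii).

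The delicate point is the direction (i)$\Rightarrow$(ii), specifically the valuation-theoretic dissection of an arbitrary armature $\cA$ of $D$: carving out of $\cA$ a totally isotropic complement to $\cA_0$ mapping isomorphically onto $G$ -- a simultaneous splitting of $1\to\cA_0\to\cA\to G\to 1$ and of the alternating pairing -- and checking that the central armature $\mathrm{rad}(\overline{\cA_0})$ of $M$ is really $\text{\sf Kum}(M/F)$ and not some other armature of $M$. A secondary technical issue is upgrading Proposition \ref{prop3.1} from $\mathbb F_p$-spaces to the mixed prime-power setting required to extend $\{e_1,\ldots,e_r\}$ to a symplectic base of $\cA$; this is where the hypothesis on roots of unity in $F$ enters.
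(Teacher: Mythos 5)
Your strategy reduces both implications to decomposing $A$ (resp.\ $D$) into a tensor product of symbol algebras in which each $k_i$ sits in its own symbol factor. That reduction is exactly what the paper proves as Corollary \ref{thm3.2}, and it is only valid in the prime-exponent case: Proposition \ref{prop3.1} is an $\mathbb F_p$-statement and the extension of a basis of a totally isotropic subgroup to a symplectic base genuinely fails over $\Z/p^k\Z$. You flag this as a ``secondary technical issue,'' but it is fatal, and the hypothesis that $F$ contains a primitive $\exp(\cM)$-th root of unity does not rescue it. Concretely: take $F=\C((s_1))((s_2))$, $A=(s_1,s_2)_4$, $M=F(\sqrt{s_1})$. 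The standard armature $\cA\cong(\Z/4\Z)^2$ contains $\text{\sf Kum}(M/F)=\{1,\,i^2F^\times\}$, which is totally isotropic, so (ii) holds with $\cM=(\Z/4\Z)^2$. But $i^2F^\times$ has order $2$ and every element of a symplectic base of $\cA$ has order $4$, so no symplectic base of $\cA$ contains it, and $A$ (being of exponent $4$) admits no decomposition $(b_1,\beta_1)_2\otimes_F A'$ with $k_1\subset(b_1,\beta_1)_2$. Your (ii)$\Rightarrow$(i) argument cannot start in this case, whereas the paper's proof works: it never extracts a symplectic base from $\cA$, but instead partitions $\cB=\bigsqcup_{\sigma\in G}\cB_\sigma$ by the value of the pairing against $\text{\sf Kum}(M/F)$ and lifts directly via $a_\sigma\mapsto(x_{a_\sigma}\otimes y_\sigma)L'^\times$, obtaining an armature of $E'$ isomorphic to $\cB$ with no symplectic-base step at all.

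There is also a degree error in your (i)$\Rightarrow$(ii). From $D\cong B'\otimes_F\bigotimes_{i=1}^r(b_i,c_i)_{n_i}\otimes_F N$ and $\deg D=\deg A$ one gets $\deg B'=\deg A/|G|^2$ (with $|G|=n_1\cdots n_r$), hence $\deg\bigl(B'\otimes\bigotimes(b_i,c_i)_{n_i}\bigr)=\deg A/|G|$, not $\deg A$; so ``as both sides have degree $\deg A$ they are isomorphic'' does not hold, and Brauer equivalence alone does not yield $A\cong B'\otimes\bigotimes(b_i,c_i)_{n_i}$. More importantly, the ``simultaneous splitting'' of $1\to\cA_0\to\cA\to G\to 1$ compatibly with the pairing that you ask for is the same kind of obstruction as the symplectic-base extension above and cannot be granted in general. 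The paper instead defines a map $\nu:\cA\to A^\times/F^\times$ by sending each class to the class of the $C\cdot z_\sigma$-part of the leading term of a representative, proves $\nu$ is a pairing-preserving (hence injective, by nondegeneracy) homomorphism, so $\nu(\cA)$ is an armature of $A$ isomorphic to $\cM$; it then identifies $\text{\sf Kum}(M/F)\subset\nu(\cA)$ by showing via Proposition \ref{prop3.3} that $\mathrm{rad}(\overline{\cA_0})$ is an armature of $M$ and hence equals $\text{\sf Kum}(M/F)$. No complement, no cancellation in the Brauer group, no degree bookkeeping beyond $\deg E=\deg A$ is needed.

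In short: the valuation-theoretic framing (tame Henselian, residue $=C$, $\Gamma_E/\Gamma_L\cong G$, leading terms) is consistent with the paper's, and your identification of the residue algebra and value group is correct. But in both directions you route the argument through a symplectic/hyperbolic splitting that is unavailable outside prime exponent, which is precisely what Theorem \ref{thm3.1} is designed to circumvent; Corollary \ref{thm3.2} is the place where that splitting legitimately appears.
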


\vspace{0.3cm}
In the particular case where $A$ is of degree $p^n$ and exponent $p$ (for a prime number $p$) and each $k_i$ is a cyclic extension of $F$ of degree $p$, we have:
\begin{corollary}\label{thm3.2}
Assume that $A$ is of degree $p^n$ and exponent $p$. Suppose $C$ is a division subalgebra of $A$,  $F$ contains a primitive $p$-th root of unity, and $n_i=p$ for $i=1,\ldots, r$. Then, the following are equivalent:
\begin{enumerate}
\item[(i)] The division algebra Brauer equivalent to $R'$ (respectively $R$) is decomposable into a tensor product of symbol algebras of degree $p$.
\item[(ii)] The algebra   $A$ decomposes as 
\[
A\simeq (k_1,\sigma_1, \delta_1)\otimes_F\cdots\otimes_F(k_r,\sigma_r, \delta_r)\otimes_F A_{r+1}\otimes_F\cdots \otimes_F A_n
\]
for some $\delta_1,\ldots,\delta_r\in F^\times$ and some symbol algebras $A_{r+1},\ldots, A_n$ of degree $p$.  
\end{enumerate}
Moreover, if these conditions are satisfied then the division algebras Brauer equivalent to $R'$ and $R$ decompose respectively as
\[
(k_1\otimes_FL', \sigma_1\otimes\text{id},\delta_1t_1)\otimes_{L'}\cdots\otimes_{L'}(k_r\otimes_FL', \sigma_r\otimes\text{id},\delta_rt_r)\otimes_F A_{r+1}\otimes_F\cdots \otimes_F A_n
\]
and
\[
(k_1\otimes_FL, \sigma_1\otimes\text{id},\delta_1t_1)\otimes_{L}\cdots\otimes_{L}(k_r\otimes_FL, \sigma_r\otimes\text{id},\delta_rt_r)\otimes_F A_{r+1}\otimes_F\cdots \otimes_F A_n.
\] 

\end{corollary}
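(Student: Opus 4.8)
The plan is to derive Corollary \ref{thm3.2} from Theorem \ref{thm3.1} by specializing the finite group $\cM$ and the pairing appropriately. Since $A$ has exponent $p$, any armature of $A$ is an $\mathbb F_p$-vector space; dually, the same holds for armatures of the division algebra Brauer-equivalent to $R'$ or $R$. So I would take $\cM=(\Z/p\Z)^{2n}$ equipped with the standard nondegenerate alternating $\mathbb F_p$-pairing (with values in $\mu_p(F)\subset F\m$, using the fixed primitive $p$-th root of unity). For this choice, having an armature isomorphic to $\cM$ is, by \cite[Prop. 2.7]{TW87} together with Examples \ref{ex3.1}(a), exactly the same as being a tensor product of $n$ symbol algebras of degree $p$; this makes condition (i) of the corollary equivalent to condition (i) of the theorem. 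The hypotheses $C$ division, $F\ni$ primitive $p$-th root of unity, and $n_i=p$ are all inherited, and $\exp(\cM)=p$.

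Next I would unpack condition (ii) of the theorem in this setting. The armature $\cA\cong\cM$ of $A$ contains $\text{\sf Kum}(M/F)$ as a totally isotropic subgroup. Since each $k_i$ has degree $p$, $\text{\sf Kum}(M/F)\cong G\cong(\Z/p\Z)^r$, and a natural base $\bar e_1,\ldots,\bar e_r$ of it is given by the classes of generators of the $k_i$ (here I use that $M=k_1\otimes_F\cdots\otimes_F k_r$ is Kummer once $F$ has a primitive $p$-th root of unity, so each $k_i=F(\sqrt[p]{b_i})$ and $\text{\sf Kum}(M/F)$ is spanned by the $\sqrt[p]{b_i}F\m$). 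Now apply Proposition \ref{prop3.1} to the $\mathbb F_p$-space $\cA$ with its nondegenerate alternating pairing and the totally isotropic base $\bar e_1,\ldots,\bar e_r$: this produces $\bar f_1,\ldots,\bar f_r,\bar e_{r+1},\bar f_{r+1},\ldots,\bar e_n,\bar f_n$ completing it to a symplectic base. By the discussion after Examples \ref{ex3.1} (the splitting $A\simeq F[(a_1)\times(b_1)]\otimes_F\cdots\otimes_F F[(a_n)\times(b_n)]$ along a symplectic base, from \cite[Lemma 2.5]{TW87}), this yields a decomposition $A\simeq F[(\bar e_1)\times(\bar f_1)]\otimes_F\cdots\otimes_F F[(\bar e_n)\times(\bar f_n)]$ into symbol algebras of degree $p$. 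The first $r$ factors each contain the relevant $k_i=F(x_{\bar e_i})$, so $F[(\bar e_i)\times(\bar f_i)]\simeq(k_i,\sigma_i,\delta_i)$ for $\delta_i=x_{\bar f_i}^{\,p}\in F\m$ (the norm of the cyclic factor), and the remaining $n-r$ factors are the $A_{r+1},\ldots,A_n$ of degree $p$. Conversely, a decomposition as in (ii) of the corollary visibly produces an armature of $A$ isomorphic to $\cM$ with symplectic base as in Examples \ref{ex3.1}(a), whose first $r$ $e$-vectors span $\text{\sf Kum}(M/F)$ (a totally isotropic subgroup). Thus (ii) of the corollary is equivalent to (ii) of the theorem, and the equivalence (i)$\Leftrightarrow$(ii) of the corollary follows.

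For the ``moreover'' clause I would, assuming (ii) of the corollary, compute $N'$ and $R'$ directly from the decomposition $A\simeq(k_1,\sigma_1,\delta_1)\otimes_F\cdots\otimes_F(k_r,\sigma_r,\delta_r)\otimes_F A_{r+1}\otimes_F\cdots\otimes_F A_n$. Tensoring with $N'=\bigotimes_{L'}(k_i\otimes_F L',\sigma_i\otimes\id,t_i)$ and using the elementary identity
\[
(k_i\otimes_F L',\sigma_i\otimes\id,\delta_i)\otimes_{L'}(k_i\otimes_F L',\sigma_i\otimes\id,t_i)\sim(k_i\otimes_F L',\sigma_i\otimes\id,\delta_i t_i)\otimes_{L'}M_{p}\!\big((k_i\otimes_F L',\sigma_i\otimes\id,\text{something})\big)
\]
— more precisely, the Brauer-group relation $(k_i,\sigma_i,a)\cdot(k_i,\sigma_i,b)=(k_i,\sigma_i,ab)$ together with the fact that $(k_i\otimes_F L',\sigma_i\otimes\id,t_i)$ is already division over $L'$ and, after absorbing $\delta_i$, so is $(k_i\otimes_F L',\sigma_i\otimes\id,\delta_i t_i)$ (each has the obvious $t_i$-adic valuation with residue $k_i$ and value group $\tfrac1p\Z$, so these stay division and the tensor product over $L'$ of these $r$ ``ramified'' symbols with the unramified $A_{r+1},\ldots,A_n$ is again division by an index computation) — one sees that $R'=A\otimes_F N'$ is Brauer-equivalent to $\bigotimes_{L'}(k_i\otimes_F L',\sigma_i\otimes\id,\delta_i t_i)\otimes_F A_{r+1}\otimes_F\cdots\otimes_F A_n$, and this last algebra is division of the right degree. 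The same argument works verbatim over $L=F((t_1))\cdots((t_r))$. I expect the main obstacle to be this ``moreover'' part: one must check that the candidate decomposition of the division algebra Brauer-equivalent to $R'$ (resp. $R$) is actually a division algebra, i.e. that its index equals its degree $p^n$, which requires an index/ramification computation for the iterated Laurent (resp. rational function) field rather than a mere Brauer-class equality — this is exactly the place where Corollary \ref{thm3.2}'s statement ``in terms of isomorphism classes'' goes beyond Tignol's Brauer-equivalence version, so the valuation-theoretic bookkeeping there is the crux.
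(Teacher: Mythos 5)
Your proposal follows essentially the same route as the paper: specialize Theorem~\ref{thm3.1} to $\cM\cong(\Z/p\Z)^{2n}$, complete $\text{\sf Kum}(M/F)$ to a symplectic base of the armature via Proposition~\ref{prop3.1}, decompose along that base via \cite[Lemma 2.5]{TW87}, and compute the Brauer class of $R'$ (resp.\ $R$) for the reverse direction. Two remarks, neither fatal. First, the aside that ``since $A$ has exponent $p$, any armature of $A$ is an $\mathbb F_p$-vector space'' is false as stated --- a symbol algebra of degree $p^2$ may have exponent $p$ and still carry an armature $\cong(\Z/p^2\Z)^2$ --- but your argument never uses this; it only needs that \emph{having} an armature $\cong(\Z/p\Z)^{2n}$ is equivalent to decomposing into degree-$p$ symbol algebras, which is correct. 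Second, you overestimate the ``moreover'' clause: no fresh index or ramification computation over $L'$ or $L$ is needed. Lemma~\ref{lm3.1} already gives $\deg E=\deg A=p^n$ and Lemma~\ref{lm3.2} already gives that $E$ is a division algebra; the candidate $(k_1\otimes_FL,\sigma_1\otimes\id,\delta_1t_1)\otimes_L\cdots\otimes_FA_n$ has degree $p^n$ and is Brauer equivalent to $R$, hence to $E$, so by the standard fact that Brauer-equivalent algebras of equal degree are isomorphic it is isomorphic to $E$ (in particular division). This is exactly how the paper closes the $(ii)\Rightarrow(i)$ direction and the ``moreover'' in one stroke, with no further valuation bookkeeping.
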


\vspace{0.3cm}
As opposed to  part  (i),  it is not enough in  part (ii)  of Theorem \ref{thm3.1}  to assume simply that $A$ has an armature to get an armature in the division algebra Brauer equivalent to $R'$ or $R$.  The  following example shows that the existence of an armature and the existence of an armature containing $\text{\sf Kum}(M/F)$ are different.
\begin{example}
Denote by $\Q_2$ the field of $2$-adic numbers, and  let $A$ be a division algebra of degree $4$ and exponent $4$ over $F=\Q_2(\sqrt{-1})$. Such an algebra is a symbol (see \cite[Th., p. 338]{Pie82}).  Note that $M=F(\sqrt{2},\sqrt{5})$ is a field since the set $\{-1, 2, 5\}$ forms a $\Z/2\Z$-basis of $\Q_2^\times/\Q_2^{\times 2}$ (see for instance \cite[Lemma 2.24, p. 163]{Lam05}). It also follows by \cite[Prop., p. 339]{Pie82} that $A\otimes M$ is split. Hence, since $[M:F]$ divides $\deg(A)$, we deduce that $M\subset A$.  Moreover, it is clear that the centralizer of $M$ in $A$ is M. The algebra $A$ being a symbol, it has an armature $\cA$ isomorphic to $(\Z/4\Z)^2$ (see Example \ref{ex3.1}). Assume that $\cA$ contains $\text{\sf Kum}(M/F)\simeq (\Z/2\Z)^2$. Since $\cA^2=\{a^2\,|\,\, a\in \cA\}=\text{\sf Kum}(M/F)$, the algebra $A$ is a symbol of the form $A=(c,d)_4$ with $c.F^{\times 2}$,  $d.F^{\times 2}\in \{2.F^{\times 2}, \,5.F^{\times 2},\, 2.5.F^{\times 2}\}$. It follows that $A\otimes A$ is either Brauer equivalent to the quaternion algebra $(2,5)$ or $(2, 2.5)$ or $(5, 2.5)$. Since $(2,5)\simeq (-1,-1)$ over $\Q_2$, the algebra $(2,5)$ is split over $F$; that is $A\otimes A$ is split. Therefore the exponent of $A$ must be $2$; impossible. Therefore $A$ has no armature containing $\text{\sf Kum}(M/F)$.

Now, let $E$ be the division algebra Brauer equivalent to $A\otimes (2,t_1)\otimes (5,t_2)$. As we will see soon (Lemma \ref{lm3.1} and Lemma \ref{lm3.2}), $\deg(E)=\exp(E)=4$. But $E$ has no armature, that is, $E$ is not a symbol. Indeed, suppose $E$ has an armature $\cB$. If $\cB\simeq (\Z/2\Z)^4$ then $E$ is a biquaternion algebra, so $\exp(E)=2$; contradiction. Therefore $\cB$ is  isomorphic to  $(\Z/4\Z)^2$.  It follows then by Theorem \ref{thm3.1}  that $A$ has an armature containing the amature $\text{\sf Kum}(M/F)$ of $M$. This is impossible as we showed above; therefore $E$ has no armature. 
\end{example}
\subsection{Brauer classes of $R'$ and $R$}
For the proof of the results above, we need an explicit description of the division algebras Brauer equivalent to $R'$ and $R$. First, we fix some notation: recall that we denoted by $G=\langle\sigma_1\rangle\times\cdots\times \langle\sigma_r\rangle$ the Galois group of $M$ over $F$. By the Skolem-Noether Theorem, for each $\sigma_i$ there exists $z_i\in A^\times$ such that $\sigma_i(b)=z_ibz_i\inv$ for all $b\in M$. Notice that  $z_i^{n_i}=:c_i\in C$ and $z_iz_jz_i\inv z_j\inv=:u_{ij}\in C$ for all $i,j$. For all $\sigma=\sigma_1^{m_1}\ldots\sigma_r^{m_r}\in G$, we set $z_{\sigma}=z_1^{m_1}\ldots z_r^{m_r}$ ($0\le m_i<n_i$). Setting $c(\sigma,\tau)=z_\sigma z_\tau (z_{\sigma\tau})\inv$, a simple observation shows for all $\sigma,\tau$
\begin{equation}\label{eq3.1}
c(\sigma,\tau)\in C \quad \text{ and }\quad z_\sigma b=\sigma(b)z_\sigma \text{ for all } b\in M.
\end{equation}
In fact, $c(\sigma,\tau)$ can be calculated from the elements $u_{ij}$ and $c_i$. 

Let $y_i$ be a generator of $(k_i\otimes L', \sigma_i\otimes 1, t_i)$, that is an element satisfying the relations $y_i(d\otimes l)y_i^{\inv}  = \sigma_i(d)\otimes l$ for all $d\in  k_i$  and $l\in L'$ and $y_i^{n_i}=t_i$. For $\sigma=\sigma_1^{m_1}\ldots \sigma_r^{m_r}\in G$ with $0\le m_i< n_i$, we set $y_\sigma=y_1^{m_1}\ldots  y_r^{m_r}$.
The algebras $N'$ and $N$ being crossed products, we may write
\[
N'=\bigoplus_{\sigma\in G}(M\otimes L')y_\sigma \quad \text{ and } \quad N=\bigoplus_{\sigma\in G}(M\otimes L)y_\sigma
\] 
and the $y_\sigma$ satisfy
\begin{equation}\label{eq3.2}
 y_\sigma y_\tau(y_{\sigma\tau})\inv \in M\otimes L \text{ and } y_\sigma(b\otimes l)=(\sigma(b)\otimes l)y_\sigma 
\end{equation}
for all  $b\in M$  and  $l\in L$.  Set $f(\sigma,\tau)= y_\sigma y_\tau(y_{\sigma\tau})\inv$. The elements  $f(\sigma,\tau)$ are in fact in  $L'$.  Indeed, let $y_\sigma= y_1^{\alpha_1}\ldots y_r^{\alpha_r}$ and $y_\tau= y_1^{\beta_1}\ldots y_r^{\beta_r}$ with $0\le \alpha_i, \beta_i < n_i$ . We get 
\begin{equation}\label{eq3.2.0}
f(\sigma,\tau)=y_\sigma y_\tau(y_{\sigma\tau})\inv=t_1^{\varepsilon_1}\ldots t_r^{\varepsilon_r}
\end{equation}
where $\varepsilon_i=0$ if $\alpha_i+\beta_i< \ord(\sigma_i)= n_i$ and $\varepsilon_i=1$ if $\alpha_i+\beta_i\geq \ord(\sigma_i)$.  Note that $f(\sigma,\tau)=f(\tau,\sigma)$ for all $\sigma,\tau\in G$.

Now, let $e$ be the \emph{separability idempotent} of $M$, that is, the idempotent $e\in M\otimes_F M$ determined uniquely by the conditions that
\begin{equation}\label{eq3.3}
e\cdot(x\otimes 1)=e\cdot(1\otimes x) \quad \text{for all } x\in M
\end{equation}
and the multiplication map $M\otimes_F M\,\to\, M$ carries $e$ to $1$ (see for instance \cite[\S 14. 3]{Pie82}). For $\sigma\in G$, let
\[
e_\sigma=(\text{id}\otimes \sigma)(e)\in M\otimes_F M.
\]
The elements $(e_\sigma)_{\sigma\in G}$ form a family of orthogonal primitive idempotents of $ M\otimes_F M$ (see \cite[\S 14. 3]{Pie82} ) and it follows, by applying $\text{id}\otimes\sigma$ to each side of (\ref{eq3.3}), that
\begin{equation}\label{eq3.4}
e_\sigma\cdot(x\otimes 1)=e_\sigma\cdot(1\otimes \sigma(x)) \quad \text{for } x\in M.
\end{equation} 
We need the following lemma:
\begin{lemma}\label{lm3.1}
Notations are as above.
\begin{enumerate}
\item The elements  $z_{\sigma}\otimes y_\sigma$ are subject to the following rules
\begin{itemize}
\item[(i)] For all $\sigma,\tau\in G$, there exists  $u\in C_L$ such that 
\[
 (z_\sigma\otimes y_\sigma)(z_\tau\otimes y_\tau)=(u\otimes 1)(z_{\sigma\tau}\otimes y_{\sigma\tau}).
\]
\item[(ii)]  $(z_\sigma\otimes y_\sigma)(c\otimes 1)=((z_{\sigma}cz^{-1}_{\sigma})\otimes 1)(z_\sigma\otimes y_\sigma)$ for all $c\in C$. Moreover, $z_{\sigma}cz^{-1}_{\sigma}\in C$ for all $\sigma\in G$. 
\end{itemize}
\item The sums
\[
E'=\sum_{\sigma\in G}C_{L'}z_\sigma\otimes y_\sigma \quad\text{and} \quad E=\sum_{\sigma\in G}C_{L}z_\sigma\otimes y_\sigma
\] 
are direct and are central simple subalgebras of $R'$ and $R$ respectively. Moreover, the algebras $R'$ and $R$ are Brauer equivalent to $E'$ and $E$ respectively, and $\deg E'=\deg E=\deg A$.
\end{enumerate}
\end{lemma}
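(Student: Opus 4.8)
The plan is to establish the algebraic identities in part (1) by direct computation using the defining relations of the $z_\sigma$ and $y_\sigma$, and then to deduce part (2) by a dimension count combined with the standard fact that $A\otimes_F A\op$ is a matrix algebra. First I would verify (1)(ii): since conjugation by $z_\sigma$ induces the automorphism $\sigma$ on $M$ and $C=C_AM$ is stable under any automorphism of $A$ fixing $M$ (here $\sigma$ extends to an inner automorphism of $A$), it follows that $z_\sigma c z_\sigma\inv\in C$ for $c\in C$; the commutation rule $(z_\sigma\otimes y_\sigma)(c\otimes 1)=((z_\sigma c z_\sigma\inv)\otimes 1)(z_\sigma\otimes y_\sigma)$ is then immediate because $y_\sigma$ commutes with $c\otimes 1$ (as $c\in A$ and $y_\sigma\in N$ live in different tensor factors of $R=A\otimes_F N$). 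For (1)(i), I would write $(z_\sigma\otimes y_\sigma)(z_\tau\otimes y_\tau)=(z_\sigma z_\tau)\otimes(y_\sigma y_\tau)=(c(\sigma,\tau)z_{\sigma\tau})\otimes(f(\sigma,\tau)y_{\sigma\tau})$ using the cocycle elements from \eqref{eq3.1} and \eqref{eq3.2.0}; since $f(\sigma,\tau)\in L'$ is central, this equals $(c(\sigma,\tau)f(\sigma,\tau)\otimes 1)(z_{\sigma\tau}\otimes y_{\sigma\tau})$, and $u:=c(\sigma,\tau)f(\sigma,\tau)\in C_{L'}\subseteq C_L$ because $c(\sigma,\tau)\in C$ by \eqref{eq3.1} and $f(\sigma,\tau)\in L'\subseteq C_{L'}$.

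Next I would show the sum $E=\sum_{\sigma\in G}C_L(z_\sigma\otimes y_\sigma)$ is direct. The key point is that the components $M\otimes L'$ (or $M\otimes L$) are separated inside $N'$ (resp.\ $N$) by the primitive idempotents $e_\sigma$ of $M\otimes_F M$: concretely, one works inside $R\otimes_{?} $ — more simply, one uses that $N=\bigoplus_\sigma (M\otimes L)y_\sigma$ is a free $M\otimes L$-module on the $y_\sigma$, hence $R=A\otimes_F N=\bigoplus_\sigma (A\otimes_F(M\otimes L))(1\otimes y_\sigma)$ is a free $A\otimes_F M\otimes_F L$-module on the $1\otimes y_\sigma$. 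Since $z_\sigma\otimes y_\sigma=(z_\sigma\otimes 1)(1\otimes y_\sigma)$ with $z_\sigma\otimes 1\in (A\otimes_F M\otimes_F L)\m$, a relation $\sum_\sigma a_\sigma(z_\sigma\otimes y_\sigma)=0$ with $a_\sigma\in C_L$ forces each $a_\sigma(z_\sigma\otimes 1)=0$, hence $a_\sigma=0$. Directness then gives $\dim_L E=|G|\cdot\dim_L C_L=[M:F]\cdot\dim_F C$; and since $\dim_F A=[M:F]\cdot\dim_F M\cdot(\dim_F C/\dim_F M)$... more precisely by the double centralizer theorem $\dim_F A=[M:F]\cdot\dim_F C$, so $\dim_L E=\dim_F A$, which already shows $\deg E=\deg A$ once we know $E$ is simple.

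To see that $E$ is a central simple $L$-subalgebra of $R$ and Brauer equivalent to $R$, I would exhibit $R$ as $E\otimes_L D$ for a suitable split or matrix complement. The natural candidate is the centralizer of $E$ in $R$: one checks that $M\otimes_F M$ sits inside $R$ (the first copy via $M\subseteq A$, the second via $M\subseteq N$) and that the idempotent $e\in M\otimes_F M$ is a full idempotent whose corner $eRe$ is isomorphic to $E$, with $R\cong M_m(E)$ where $m^2=\dim_L R/\dim_L E=\dim_F N/\dim_F C=[M:F]\cdot(\dim_F A/\dim_F C)=[M:F]^2$, i.e.\ $m=[M:F]$; this uses the computation $e_\sigma(x\otimes 1)=e_\sigma(1\otimes\sigma(x))$ from \eqref{eq3.4} to identify the corner algebra. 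This Peirce-decomposition step — showing $eRe\cong E$ and that $e$ is full — is the main obstacle, since it requires carefully tracking how the idempotents $e_\sigma$ interact with the elements $z_\sigma\otimes y_\sigma$; once it is in place, central simplicity of $E$ follows from that of $R$ (a corner of a central simple algebra by a full idempotent is central simple and Brauer equivalent), and the degree equality $\deg E=\deg A$ follows from $R\sim A$ together with $\dim_L E=\dim_F A$.
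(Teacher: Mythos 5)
Your treatment of part (1) and of the directness of the sum is correct; in fact your directness argument --- viewing $R=A\otimes_F N$ as a free left $A\otimes_F M\otimes_F L$-module on the elements $1\otimes y_\sigma$ and noting that $z_\sigma\otimes 1$ is a unit --- is somewhat cleaner than the paper's, which picks a vanishing linear combination with minimally many nonzero terms and conjugates by a well-chosen $b\in M$ to shorten it.

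There is, however, a genuine gap in the remainder of part (2). You make both the central simplicity of $E$ and the Brauer equivalence $R\sim E$ rest on the identification $eRe\cong E$ for the separability idempotent $e$, but you explicitly flag that identification as ``the main obstacle'' and do not carry it out; your plan therefore stops precisely where the substantive work lies. The paper does not go through a Peirce corner at all. Central simplicity of $E$ is obtained by observing, from the rules in part (1), that $E$ is a generalized crossed product of $G$ over the central simple $L$-algebra $C_L$ with group elements $z_\sigma\otimes y_\sigma$; such algebras are central simple by the same argument as for ordinary crossed products (Albert, Th.~11.11; Jacobson, \S 1.4). For the Brauer equivalence, rather than computing $eRe$ the paper shows that each $e_\tau$ \emph{centralizes} $E$: using the characterizing property (\ref{eq3.4}) of the separability idempotent one verifies $\operatorname{Int}(z_\sigma\otimes y_\sigma)(e_\tau)=e_\tau$ for all $\sigma,\tau\in G$. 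Since $\deg C_R E = \deg R/\deg E = n_1\cdots n_r = |G|$ by the double centralizer theorem, and $C_R E$ contains the $|G|$ pairwise orthogonal nonzero idempotents $e_\tau$, the algebra $C_R E$ must be split, whence $R = E\otimes_L C_R E\sim E$. You should either complete the corner identification you sketch (which would in any case hinge on the same computation $\operatorname{Int}(z_\sigma\otimes y_\sigma)(e_\tau)=e_\tau$), or switch to this generalized-crossed-product route.
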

\begin{proof}
(1) The statement of (i) follows from relations (\ref{eq3.1}),  (\ref{eq3.2}) and the fact that $f(\sigma,\tau)\in L'$. Since the elements  $1\otimes y_\sigma$ centralize $C_{L}\otimes 1$, the statement of (ii) is clear.

(2) Suppose  $\sum_{\sigma\in G}c_\sigma z_\sigma\otimes y_\sigma=0$ with $c_\sigma\in C_{L'}$ (or $C_{L}$).  Pick such a sum with a minimal number of non-zero terms. There are at least two non-zero elements, say $c_\rho z_\rho\otimes y_\rho$, $c_\tau z_\tau \otimes y_\tau$, in the sum. Let $b\in M$ be such that $\rho(b)\ne b$ and $\tau(b)=b$. One has
\[
(b\otimes 1)\Big(\sum_{\sigma\in G}c_\sigma z_\sigma\otimes y_\sigma\Big)(b\otimes 1)\inv-\sum_{\sigma\in G}c_\sigma z_\sigma\otimes y_\sigma=0
\]
and the number of non-zero terms is nontrivial and strictly smaller; contradiction. Whence we have the direct sums $E'=\bigoplus_{\sigma\in G}C_{L'}z_\sigma\otimes y_\sigma$ and $E=\bigoplus_{\sigma\in G}C_{L}z_\sigma\otimes y_\sigma$. It is clear that $E'\subset R'$ and $E\subset R$. On the other hand, the computation rules of the part (1) show that $E'$ and $E$ are generalized crossed products (see \cite[Th. 11.11]{Alb61} or \cite[\S 1.4]{Jac96}). Hence, the same arguments as for the usual crossed products show that $E'$ and $E$ are central simple algebras over $L'$ and $L$ respectively.  Moreover, since $\dim C = \frac{1}{n_1\ldots n_r}\dim A$, we have $\deg E'=\deg E=\deg A$.

Now, it remains to show that $R'$ and $R$ are respectively Brauer equivalent to $E'$ and $E$. For this we work over $L$; the same arguments apply over $L'$.  For $z_\sigma\otimes y_\sigma$ as above, consider the inner automorphism
\[
\text{Int}(z_\sigma\otimes y_\sigma)\,:\, R\, \longrightarrow\, R.
\]
Notice that Int$(z_\sigma\otimes y_\sigma)(e_\tau)$ is in $M\otimes M$ and is a primitive idempotent  for all $\tau\in G$  (since $e_\tau$ is primitive). Moreover, for $x\in M$,
\begin{eqnarray*}
\text{Int}(z_\sigma\otimes y_\sigma)(e_\tau)\cdot(x\otimes 1) &=& (z_\sigma\otimes y_\sigma)e_\tau (z_\sigma\inv\otimes y_\sigma\inv)\cdot(x\otimes 1)\\
& = & (z_\sigma\otimes y_\sigma)e_\tau \cdot (\sigma\inv(x)\otimes 1)(z_\sigma\inv\otimes
 y_\sigma\inv)\\
 & = & (z_\sigma\otimes y_\sigma)e_\tau \cdot (1 \otimes \tau\sigma\inv(x) )(z_\sigma\inv\otimes
 y_\sigma\inv)\\
 & = & (1 \otimes \tau(x) )\cdot\text{Int}(z_\sigma\otimes y_\sigma)(e_\tau).
\end{eqnarray*}
Therefore $\text{Int}(z_\sigma\otimes y_\sigma)(e_\tau)=e_\tau$ by comparing with the definition of $e$ and the relation (\ref{eq3.4}). Hence, each $e_\tau$ centralizes $E$ in $R$. On the other hand, since the degree of the centralizer $C_RE$ of $E$ in $R$ is $n_1\ldots n_r$ and $(e_\tau)_{\tau\in G}\subset C_RE$, the algebra $C_RE$ is split. So $R$ is Brauer equivalent to $E$.
\end{proof}
\begin{lemma}\label{lm3.2}
Notations are as in Lemma \ref{lm3.1}. If  $C$ is a division algebra then there exists a unique valuation  on $E$ which extends the $(t_1,\ldots, t_r)$-adic valuation on $L$. Consequently  $E'$ and $E$ are division algebras.
\end{lemma}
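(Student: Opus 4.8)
The plan is to build a valuation on $E$ directly from its explicit grading $E=\bigoplus_{\sigma\in G}C_L(z_\sigma\otimes y_\sigma)$ (Lemma \ref{lm3.1}) and read off both assertions from it.

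First I would recall that, as a ring, $C_L=C\otimes_F L$ is the iterated Laurent series ring $C((t_1))\cdots((t_r))$: an $F$-basis of $C$ identifies an element of $C\otimes_F L$ with a Laurent series in $t_1,\ldots,t_r$ having coefficients in $C$. Since $C$ is a division ring it is a domain, so the $(t_1,\ldots,t_r)$-adic order
\[
w_0\Big(\sum_{\mathbf m}c_{\mathbf m}\,t_1^{m_1}\cdots t_r^{m_r}\Big)=\min\{\mathbf m\in\Z^r:c_{\mathbf m}\ne 0\}
\]
($\Z^r$ ordered lexicographically) is a valuation on $C_L$ extending the $(t_1,\ldots,t_r)$-adic valuation $v_L$ of $L$, with value group $\Z^r$ and residue ring $C$. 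Moreover conjugation by any $z_\sigma\otimes 1$ maps $C_L$ into itself (Lemma \ref{lm3.1}(1)(ii)), fixes $1\otimes L$ pointwise, and acts on the coefficients through an automorphism of $C$, hence preserves $w_0$.

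Next, for $\sigma=\sigma_1^{a_1}\cdots\sigma_r^{a_r}$ with $0\le a_i<n_i$ set $\gamma_\sigma=(a_1/n_1,\ldots,a_r/n_r)$ in $\Gamma:=\tfrac1{n_1}\Z\times\cdots\times\tfrac1{n_r}\Z$, ordered lexicographically, and define
\[
w\Big(\sum_{\sigma\in G}c_\sigma(z_\sigma\otimes y_\sigma)\Big)=\min_{\sigma\in G}\big(w_0(c_\sigma)+\gamma_\sigma\big),\qquad c_\sigma\in C_L,
\]
which is well defined because the sum $E=\bigoplus_\sigma C_L(z_\sigma\otimes y_\sigma)$ is direct. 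Three facts make $w$ a valuation. (a) The cosets $\gamma_\sigma+\Z^r$ are pairwise distinct for distinct $\sigma$, since $a_i/n_i\bmod\Z$ runs through the $n_i$ distinct classes of $\tfrac1{n_i}\Z/\Z$. (b) $w_0$ is a valuation on the domain $C_L$ invariant under each $\text{Int}(z_\sigma\otimes 1)$. (c) In the decomposition $C_L=C\otimes_F L$ the cocycle $u_{\sigma,\tau}\in C_L$ defined by $(z_\sigma\otimes y_\sigma)(z_\tau\otimes y_\tau)=u_{\sigma,\tau}(z_{\sigma\tau}\otimes y_{\sigma\tau})$ equals $c(\sigma,\tau)\otimes f(\sigma,\tau)$, so by \eqref{eq3.2.0} $w_0(u_{\sigma,\tau})=w_0\big(f(\sigma,\tau)\big)=(\varepsilon_1,\ldots,\varepsilon_r)=\gamma_\sigma+\gamma_\tau-\gamma_{\sigma\tau}$. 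Granting (a)--(c), the standard argument for crossed-product valuations applies: for nonzero $x,y\in E$ the minimum defining $w(x)$ is attained at a unique index $\sigma_0$ and that of $w(y)$ at a unique $\tau_0$; expanding $xy$ and moving the left-hand scalars across $z_\sigma\otimes y_\sigma$ via $\text{Int}(z_\sigma\otimes 1)$, every term of $xy$ has $w$-value $\ge w(x)+w(y)$, with equality only for the single term arising from $(\sigma_0,\tau_0)$, which therefore survives in the $\sigma_0\tau_0$-component; hence $w(xy)=w(x)+w(y)$, while $w(x+y)\ge\min(w(x),w(y))$ is immediate. Thus $w$ is a valuation on $E$ extending $v_L$, with value group $\Gamma$ and residue ring $C$.

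Finally, since $w(xy)=w(x)+w(y)$ for all nonzero $x,y$, the algebra $E$ has no zero divisors, and a finite-dimensional $L$-algebra without zero divisors is a division algebra; applying the same reasoning to $E'=\bigoplus_\sigma C_{L'}(z_\sigma\otimes y_\sigma)\subseteq E$ (equivalently, restricting $w$ to $E'$) shows $E'$ is a division algebra as well. For uniqueness I would use that $L=F((t_1))\cdots((t_r))$ is henselian for its $(t_1,\ldots,t_r)$-adic valuation, being an iterated Laurent series field (a composite of complete discrete valuations), and that a valuation of a henselian field extends uniquely to every finite-dimensional division algebra over it; applied to the division algebra $E$ this yields the asserted uniqueness. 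The only genuinely delicate point is verifying multiplicativity of $w$ in step (c)---ruling out cancellation after regrouping $xy$ by homogeneous component, which is exactly where the distinctness of the cosets $\gamma_\sigma+\Z^r$ and the fact that $w_0$ is a valuation on the domain $C_L$ are used; the rest is bookkeeping with \eqref{eq3.1}--\eqref{eq3.2.0} and Lemma \ref{lm3.1}.
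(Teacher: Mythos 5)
Your proposal is correct and is essentially the same argument as the paper's: build the valuation on $C_L$ as the Laurent-series order, extend it to $E$ via $w(\sum c_\sigma(z_\sigma\otimes y_\sigma))=\min_\sigma(w_0(c_\sigma)+v(y_\sigma))$ using that the classes $v(y_\sigma)\bmod\Z^r$ are distinct, check multiplicativity by the leading-term/crossed-product argument, and get uniqueness from Henselianity of $L$ via Wadsworth's theorem. The only small point to tighten is the ordering: the paper uses the right-to-left lexicographic order on $\Z^r$ (consistent with $L=F((t_1))\cdots((t_r))$ built outermost in $t_r$), whereas you just say "lexicographically"; this should be specified, but it does not affect the argument.
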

\begin{proof}
The  $(t_1,\ldots, t_r)$-adic valuation on $L$ being Henselian, it extends to a unique valuation to each  division algebra over $L$ (see for instance \cite{Wad86}).  It follows that there is  a valuation  on $C_L$ extending the  $(t_1,\ldots, t_r)$-adic valuation on $L$.  More precisely this valuation is constructed as follows:   writing $C_L$ as
\[
C_L=\Bigg\{\sum_{i_1\in\Z}\cdots \sum_{i_r\in\Z}c_{i_1\ldots i_r}t_1^{i_1}\ldots t_r^{i_r} 
\left| \begin{array}{ll} c_{i_1\ldots i_r} \in C \text{ and }\\
\{ (i_1,\ldots, i_r)\, |\,  c_{i_1\ldots i_r}\ne 0 \text{ is well-ordered}\\
 \text{for the right-to-left lexicographic ordering}\} 
 \end{array}
\right.
 \Bigg\},
\]
computations show that the map $v: C_L\m \to \Z^r$ defined by   
\[
v\Big(\sum_{i_1\in\Z}\cdots \sum_{i_r\in\Z}c_{i_1\ldots i_r}t_1^{i_1}\ldots t_r^{i_r}\Big)= \min \{(i_1,\ldots, i_r) \,|\, c_{i_1\ldots i_r}\ne 0 \}  
\]
is a valuation. Clearly $v$ extends the  $(t_1,\ldots, t_r)$-adic valuation on $L$.  Recall that $N$ is a division  algebra over $L$ (see e.g. \cite[Ex. 3.6]{Wad02}). We also denote by $v$ the unique extension of the $(t_1,\ldots, t_r)$-valuation  to $N$. Since $y_i^{n_i}=t_i$, we have 
\[
v(y_i)=(0,\ldots,0,\frac{1}{n_i},0,\ldots, 0)\in \frac{1}{n_1}\Z\times\ldots \times\frac{1}{n_r}\Z .
\]
 Hence, for $y_\sigma = y_1^{m_i}\ldots y_r^{m_r}$ where $\sigma= \sigma_1^{m_1}\ldots \sigma_r^{m_r}$ (with $1\le m_i < n_i$), we have $v(y_\sigma)=(\frac{m_1}{n_1},\ldots, \frac{m_r}{n_r})$. It then follows that 
 \begin{equation}\label{lmeq1}
 v(y_\sigma)\not\equiv v(y_\tau)\bmod \Z^r  \quad \text{ if }\quad \sigma\ne \tau.
 \end{equation}

Now,  define a map $w\,:\, E^\times \to \frac{1}{n_1}\Z\times\ldots \times\frac{1}{n_r}\Z$ as follows: for any $\sigma\in G$ and any $c\in C_L^\times$, set 
\[
w(cz_\sigma\otimes y_\sigma)= v(c)+v(y_\sigma).
\]
For any $s\in E^\times$, $s$ has a unique representation $s=\sum_{\sigma\in G}c_\sigma z_\sigma\otimes y_\sigma$ with the $c_\sigma\in C_L$ and some $c_\sigma\ne 0$. Define
\[
w(s)= \min_{\sigma\in G}\{w(c_\sigma z_\sigma\otimes y_\sigma)\, |\, c_\sigma \ne 0\}.
\]
It follows by  (\ref{lmeq1}) that $w(c_\sigma z_\sigma\otimes y_\sigma)\ne w(c_\tau z_\tau\otimes y_\tau)$ for $\sigma\ne \tau$. Thus, there is a unique summand   $c_\iota z_\iota\otimes  y_\iota$ of $s$ such that $w(s)=w(c_\iota z_\iota\otimes y_\iota)$;  this $ c_\iota z_\iota\otimes  y_\iota$ is called the \emph{leading term} of $s$.

We are going to show that $w$ is a valuation on $E$. Let $s'=\sum_{\sigma\in G}d_\sigma z_\sigma\otimes y_\sigma\in E^\times$ with $d_\sigma\in C_L$ and $s+s'\ne 0$. Let $(c_\rho+ d_\rho)z_\rho\otimes  y_\rho$ be the leading term of $s+s'$. If $ c_\rho\ne 0$ and $ d_\rho\ne 0$, we have
\begin{eqnarray*}
w (( c_\rho + d_\rho)z_\rho\otimes  y_\rho) & = & v(c_\rho+d_\rho)+v(y_\rho)\\
& \geq & \min (v(c_\rho)+ v(y_\rho), v(d_\rho)+v(y_\rho))\\
& = & \min (w(c_\rho z_\rho\otimes y_\rho), w(d_\rho z_\rho\otimes y_\rho))\\
& \geq & \min (w(s), w(s')).
\end{eqnarray*}
Thus, $w(s+s')= w (( c_\rho + d_\rho)z_\rho\otimes  y_\rho))\geq \min (w(s), w(s'))$. This inequality still holds if $c_\rho =0$ or $d_\rho=0$.

By the usual argument,  we also check that
\begin{equation}\label{lmeq2}
\text{if } w(s)\ne w(s') \text{ then } w(s+s')=\min (w(s), w(s')).
\end{equation}
It remains to show that $w(ss')= w(s)+w(s')$. For $\sigma,\tau\in G$, recall that
\[
(z_\sigma\otimes y_\sigma)(z_\tau\otimes y_\tau)= c(\sigma,\tau)f(\sigma,\tau)(z_{\sigma\tau}\otimes y_{\sigma\tau})
\]
for some $c(\sigma,\tau)\in C^\times$ and  some $f(\sigma,\tau)\in L^\times$. It follows that, for $c_\sigma, d_\tau\in C^\times_L$,
\begin{eqnarray} \label{lmeq3}
w((c_\sigma z_\sigma\otimes y_\sigma)(d_\tau z_\tau\otimes y_\tau))  &=& w(c_\sigma(z_\sigma d_\tau z_\sigma\inv)(z_\sigma\otimes y_\sigma)(z_\tau\otimes y_\tau))  \quad \text{(Lemma \ref{lm3.1})} \nonumber\\
& = & w(c_\sigma(z_\sigma d_\tau z_\sigma\inv) c(\sigma,\tau)f(\sigma,\tau)(z_{\sigma\tau}\otimes y_{\sigma\tau}))\nonumber\\
& = & v(c_\sigma)+ v(d_\tau)+ v(f(\sigma,\tau)y_{\sigma\tau})\nonumber\\
& = & v(c_\sigma)+ v(y_\sigma)+ v(d_\tau)+ v(y_\tau) \nonumber\\
& = & v(c_\sigma z_\sigma\otimes y_\sigma) + v(d_\tau z_\tau\otimes y_\tau).
\end{eqnarray}
Hence, we have
\begin{eqnarray} \label{lmeq4}
w(ss') &=& w\Big(\sum_{\sigma,\tau}(c_\sigma z_\sigma\otimes y_\sigma)(d_\tau z_\tau\otimes y_\tau)\Big)\nonumber\\
& \geq & \min_{\sigma,\tau}\{w((c_\sigma z_\sigma\otimes y_\sigma)(d_\tau z_\tau\otimes y_\tau))\,|\, c_\sigma, d_\tau \ne 0\}\nonumber\\
& = & \min_{\sigma,\tau} \{w(c_\sigma z_\sigma\otimes y_\sigma)+ w(d_\tau z_\tau\otimes y_\tau)\,|\, c_\sigma, d_\tau \ne 0\}\nonumber\\
& \geq & w(s)+ w(s').
\end{eqnarray}
Let $c_\rho z_\rho\otimes y_\rho$ and $d_\iota z_\iota \otimes y_\iota$ be the leading terms of $s$ and $s'$ respectively. Set $s_1=s-c_\rho z_\rho\otimes y_\rho$ and $s'_1= s'-d_\iota z_\iota \otimes y_\iota$. So, 
\[
w(s)=w(c_\rho z_\rho\otimes y_\rho)< w(s_1) \quad \text{and}\quad  w(s')= w(d_\iota z_\iota \otimes y_\iota)< w(s'_1).
\]
Writing
\[
ss'=(c_\rho z_\rho\otimes y_\rho)(d_\iota z_\iota \otimes y_\iota)+ s_1(d_\iota z_\iota \otimes y_\iota)+(c_\rho z_\rho\otimes y_\rho)s_1' + s_1s_1',
\]
it follows by (\ref{lmeq3}) and (\ref{lmeq4}) that the first  summand in the right side of the above equality has valuation strictly smaller than the other three. Hence, by (\ref{lmeq2}) and (\ref{lmeq3}), one has $ss'\ne 0$ and
\[
w(ss')= w((c_\rho z_\rho\otimes y_\rho)(d_\iota z_\iota \otimes y_\iota))= w(s)+w(s').
\]
Therefore $w$ is a valuation on $E$. Since $E=E'\otimes_{L'}L$, the restriction of $w$ to $E'$ is also a valuation. The uniqueness of $w$ follows from its existence by \cite{Wad86}.
\end{proof}
Let  $D$ be a  division algebra with a valuation. The \emph{residue} division algebra of $D$ is denoted by $\overline{D}$.

We keep the notations  above.  Now, suppose $C$ is a division subalgebra of $A$ and denote by $\Gamma_E$ and $\Gamma_{L}$ the corresponding value groups of $E$ and $L$ respectively.  Furthermore, assume that $E$ has an armature $\cA$. The diagram
\[
\xymatrix   { 
    1 \ar[r] & L^\times \ar[r] \ar[d]^v  & E^\times \ar[r] \ar[d]^w & E^\times/L^\times \ar[r] \ar@{.>}[d] & 1  \\
    0 \ar[r] & \Gamma_L \ar[r] & \Gamma_E \ar[r] & \Gamma_E/\Gamma_L \ar[r] & 0}
\]
induces a homomorphism
\[
w'\,:\,\cA\subset  E^{\times}/L^\times\,\longrightarrow\, \Gamma_E/\Gamma_L.
\]
Put $\cA_0=\ker w'$  and let $a\in \cA_0$. The above diagram shows that there exists a representative $x_a$ of $a$ such that $w(x_a)=0$. Define 
\[
\bar\,\,\,:\,\cA_0\,\longrightarrow\,\overline{E}^\times/\overline L^\times=C^\times/F^\times
\]
by
\[
a=x_aL^{\times}\,\longmapsto\, \bar a=\bar x_aF^{\times}
\]
where $x_a$ is such that $w(x_a)=0$ and $\bar x_a$ is the residue of $x_a$. If $y_a$ is another representative of $a$ such that $w(y_a)=0$, there is $h\in L^\times$ with $w(h)=0$ such that $y_a=x_ah$. Hence $\bar y_a=\bar x_a\bar h$, that is $\bar y_a=\bar x_aF^\times$, so $\bar\,\,$ is well defined. We have:
\begin{proposition}\label{prop3.3}
Assume that $C$ is a division algebra and $\cA$ is an armature of $E$ as above. Then 
\begin{itemize}
\item[(1)] The map $\bar\,\, \,:\, \cA_0\,\longrightarrow\,C^\times/F^\times$ is an injective homomorphism.
\item[(2)] The image $\overline{\cA_0}$ of $\cA_0$ is an armature of $C$ over $F$.
\end{itemize}
\end{proposition}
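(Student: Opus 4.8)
The plan is to establish the two assertions in order, first checking that $\bar{\;}$ is a well-defined group homomorphism, then injectivity, and finally the armature property. That $\bar{\;}$ is a homomorphism follows by choosing, for $a,b\in\cA_0$, representatives $x_a,x_b\in E^{\times}$ with $w(x_a)=w(x_b)=0$; then $x_ax_b$ is a representative of $ab$ with $w(x_ax_b)=w(x_a)+w(x_b)=0$ since $w$ is a valuation (Lemma \ref{lm3.2}), so $\overline{ab}=\overline{x_ax_b}=\bar x_a\bar x_b=\bar a\bar b$, using that residue respects products of units. For injectivity, suppose $\bar a=1$ in $C^{\times}/F^{\times}$, so $\bar x_a\in F^{\times}$ for a representative with $w(x_a)=0$; then $x_a$ and a suitable scalar from $F^{\times}\subset L^{\times}$ differ by an element of $E^{\times}$ of positive value whose residue is $1$. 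The key point is that an element of the armature $\cA\subset E^{\times}/L^{\times}$ has finite order, hence $x_a^m\in L^{\times}$ for $m=\ord(a)$; combined with $w(x_a)=0$ and $\bar x_a\in F^{\times}$ one concludes $x_a\in L^{\times}$, i.e. $a=1$ in $\cA_0$. Here I would use that $w$ restricted to $L$ is the $(t_1,\dots,t_r)$-adic valuation with residue field $F$, so a unit of $E$ with residue in $F^{\times}$ and $m$-th power in $L$ must itself lie in $L$ (this uses that $\Gamma_E/\Gamma_L$ is torsion-free modulo the armature contribution, or more simply that $x_a^m\in L$ forces $a\in\cA_0$ to have the claimed triviality after chasing the diagram).

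For part (2), the strategy is to count dimensions. Write $n=\deg A=\deg E$ (Lemma \ref{lm3.1}), so $\dim_F A=n^2$ and $\dim_F C=n^2/(n_1\cdots n_r)$ since $C=C_AM$ and $[M:F]=n_1\cdots n_r$. On the other hand $|\cA|=\dim_L E=n^2$, and the homomorphism $w'\colon\cA\to\Gamma_E/\Gamma_L$ has image of size dividing $|\Gamma_E/\Gamma_L|$. The valuation $w$ on $E$ built in Lemma \ref{lm3.2} takes values in $\tfrac1{n_1}\Z\times\cdots\times\tfrac1{n_r}\Z$ with $\Gamma_L=\Z^r$ sitting inside, and the leading-term analysis shows $\Gamma_E/\Gamma_L$ has order exactly $n_1\cdots n_r$ (the classes of $v(y_\sigma)$, $\sigma\in G$, by \eqref{lmeq1}, together with the fact that $v$ on $C_L$ is $\Z^r$-valued). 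Hence $|\cA_0|=|\ker w'|=|\cA|/|\mathrm{Im}\,w'|\ge n^2/(n_1\cdots n_r)=\dim_F C$. Once injectivity of $\bar{\;}$ is known, $|\overline{\cA_0}|=|\cA_0|\ge \dim_F C$, and since $\overline{\cA_0}$ is an abelian subgroup of $C^{\times}/F^{\times}$, the inequality $|\overline{\cA_0}|\le\dim_F C$ always holds (the representatives $x_a$ span an $F$-subalgebra of $C$ of dimension at most $\dim_F C$, with equality forcing the armature equation); therefore $|\overline{\cA_0}|=\dim_F C$. Finally one checks $F[\overline{\cA_0}]=C$: the residues $\bar x_a$ for $a$ running over $\cA_0$ are $F$-linearly independent (distinct elements of $C^{\times}/F^{\times}$ lift to linearly independent elements, a standard armature fact used already in Section \ref{sect2}), and there are $\dim_F C$ of them, so they form an $F$-basis of $C$; together with abelianness this says $\overline{\cA_0}$ is an armature of $C$.

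I expect the main obstacle to be the exact computation $|\Gamma_E/\Gamma_L|=n_1\cdots n_r$, equivalently showing $w'$ is surjective onto $\Gamma_E/\Gamma_L$ and that this quotient is no larger than expected. Surjectivity is easy because the elements $1\otimes y_\sigma$ (images of $z_\sigma\otimes y_\sigma$ with $c=1$) lie in $\cA$ — or at least their classes in $E^{\times}/L^{\times}$ are hit by $\cA$ — and their values generate $\tfrac1{n_1}\Z\times\cdots\times\tfrac1{n_r}\Z$ modulo $\Z^r$; the subtler half is the upper bound $|\Gamma_E|\le |\tfrac1{n_1}\Z\times\cdots\times\tfrac1{n_r}\Z|$ relative to $\Gamma_L$, which I would extract from the explicit description of $w$ in the proof of Lemma \ref{lm3.2}: every $s\in E^{\times}$ has leading term $c_\iota z_\iota\otimes y_\iota$ with $w(s)=v(c_\iota)+v(y_\iota)\in\Z^r+v(y_\iota)$, and $v(y_\iota)$ ranges over exactly $n_1\cdots n_r$ cosets of $\Z^r$. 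One should also be slightly careful that $z_\sigma$ need not lie in $E$ alone — it lives in $A$ — but $z_\sigma\otimes y_\sigma\in E$ does, and that is all that is used; the representatives $x_a$ for $a\in\cA$ may be taken among the $c_\sigma z_\sigma\otimes y_\sigma$, which keeps all value and residue computations inside the framework already set up. Modulo these bookkeeping points the argument is a direct dimension count plus the fundamental inequality $|\Gamma_E/\Gamma_L|\cdot[\overline E:\overline L]\le [E:L]$ for valued division algebras, which together with the two lower bounds forces equality everywhere.
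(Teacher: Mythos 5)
Your homomorphism step and your use of $\lvert\Gamma_E/\Gamma_L\rvert = n_1\cdots n_r$ to get the lower bound $\lvert\cA_0\rvert\ge\dim_F C$ agree with the paper, but there are two genuine gaps.

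For injectivity you try to deduce $x_a\in L^\times$ from $w(x_a)=0$, $\bar x_a\in F^\times$ and $x_a^m\in L^\times$. The claim can be rescued, but only by invoking Hensel's lemma for the Henselian field $L$ (to produce an $m$-th root of $x_a^m$ in $L$ lifting $\bar x_a$), checking that $F$ contains the needed primitive $m$-th root of unity (which must be extracted from the nondegeneracy of the pairing via a symplectic base), and then using that any $m$-th root of unity in a division $L$-algebra already lies in $L$ when $\mu_m\subset L$. You do none of this, and the justifications you do offer are wrong: $\Gamma_E/\Gamma_L$ is a finite group of order $n_1\cdots n_r$, so ``torsion-free modulo the armature contribution'' is not a meaningful statement, and ``chasing the diagram'' does not yield the implication. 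The paper's argument is much cleaner and sidesteps all of this: write $x_c=\alpha+x_c'$ with $\alpha=\bar x_c\in F^\times$ and $w(x_c')>0$, use nondegeneracy of the pairing on $\cA$ to find $d\in\cA$ with $\langle d,c\rangle=\zeta\neq 1$, and then compare the residue of $x_d x_c x_d^{-1}$ computed two ways to obtain $\zeta\alpha=\alpha$, a contradiction. You should adopt this, or else fill in Hensel and the roots-of-unity lemma explicitly.

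For part (2), the pivot of the whole count is the inequality $\lvert\overline{\cA_0}\rvert\le\dim_F C$, which requires the residues $(\bar x_a)_{a\in\cA_0}$ to be $F$-linearly independent. You assert this as ``a standard armature fact,'' but it is not: distinct elements of $C^\times/F^\times$ do not in general lift to linearly independent representatives, and one does not yet know that $\overline{\cA_0}$ is an armature --- that is what is being proved. The paper establishes independence by taking a dependence relation $\sum_{a\in S}\lambda_a\bar x_a=0$ with minimal support, conjugating by $\bar x_s$ for $s\in S$ to replace $\lambda_a$ by $\langle a,s\rangle\lambda_a$, subtracting to shrink the support (using $\langle s,s\rangle=1$), and concluding by minimality that $\langle a,s\rangle=1$ for all $a,s\in S$; the $\bar x_s$ then commute, so Tignol's Lemma 1.5 gives independence, a contradiction. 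Without this argument, your parenthetical ``with equality forcing the armature equation'' is circular and the upper bound has no support, so the dimension count does not close.
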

\begin{proof}
(1) Let $a=x_aL^\times$ and $b=x_bL^\times$ be such that $w(x_a)=w(x_b)=0$.  Choosing $x_{ab}=x_ax_b$, we have $w(x_{ab})=0$. Therefore $\bar x_{ab}=\bar x_a\bar x_b$; this shows that $\bar\,\,$ is a homomorphism.

Let $c\in \ker\bar\,\,$  with $c\ne 1$ and let $x_c\in E\m$ be a representative of $c$ such that $w(x_c)=0$. Let $\bar x_c=\alpha \in F\m$; then $x_c=\alpha+ x_c'$ for some  $x_c'\in  E$ with $w(x_c')>0$. The pairing $\langle \,,\,\rangle$ being nondegenerate on $\cA$, there exists $d \in \cA$ such that $\langle d,c\rangle=\zeta$ for some $1 \ne\zeta\in \mu(F)$. Thus, 
\[
\overline{x_{d}x_cx_{d}^{-1}}=\zeta\bar x_c=\zeta\alpha.
\]
On the other hand, 
\[
x_{d}x_cx_{d}^{-1}=x_{d}\alpha x_{d}^{-1}+x_d x_c' x_{d}^{-1}=\alpha+x_{d} x_c' x_{d}^{-1}.
\]
Hence, we get
\[
\overline{x_{d}x_cx_{d}^{-1}}=\alpha \,\,\,\text{ since } \,\,\, w(x_{d}x_c' x_{d}^{-1})>0; \,\, \text{contradiction}.
\]
Therefore the map $\bar\,\,$ is injective.  Consequently,  we have $|\overline{\cA_0}|=|\cA_0|$.

(2)  We first show that $\vert \cA_0\vert\le \dim_F C$: since $C=\overline E$, it suffices to prove that $(\bar x_a)_{a\in\cA_0}$ are linearly independent over $F$. Let  $\sum_{a\in \cA_0}\lambda_a\bar x_a=0$,
with $\lambda_a\in F$, be a zero linear combination such that the set 
\[
S=\{a\in \cA_0\,\,|\,\,\lambda_a \ne 0\}
\]
is not empty and of least cardinality. For $s\in S$, let $x_s$ be a representative of $s$ in $E\m$ such that $w(x_s)=0$. We have 
\[
\bar x_s\Big(\sum_{a\in \cA_0}\lambda_a\bar x_a\Big)\bar x_s\inv=\sum_{a\in \cA_0}\langle a,s\rangle \lambda_a\bar x_a=0=\sum_{a\in \cA_0}\lambda_a\bar x_a.
\] 
Then the linear combination  $\sum_{a\in \cA_0}(1-\langle a,s\rangle) \lambda_a\bar x_a$ is zero and the number of non-zero coefficients is less than the cardinality of $S$ because $\langle s,s\rangle=1$. Therefore, $\langle a,s\rangle=1$ for all $a,s\in S$; this implies that $\bar x_s$ and $\bar x_{s'}$ commute for all $s, s'\in S$. It follows from \cite[Lemma 1.5]{Tig82} that the elements $\bar x_s$, for $s\in S$, are linearly independent; contradicting the fact that $S$ is not empty. Combining with the part (1), we get 
\[
\vert\overline{\cA_0}\vert=\vert\cA_0\vert\le\dim C=\frac{1}{n_1\ldots n_r}\vert\cA\vert.
\]
On the other hand, since $\vert w'(\cA)\vert=\frac{\vert\cA\vert}{\vert\cA_0\vert}$, we have $\vert w'(\cA)\vert\geq n_1\ldots n_r$. We already know that $\vert w'(\cA)\vert\le n_1\ldots n_r$ because $w'(\cA)\subset \Gamma_E/\Gamma_L$ and  $|\Gamma_E/\Gamma_L|=n_1\ldots n_r$.   It follows that $\vert w'(\cA)\vert= n_1\ldots n_r$ and $\vert\overline{\cA_0}\vert=\dim_F C$. Since we showed that $(\bar x_a)_{a\in \cA_0}$ are linearly independent,  the subgroup $\overline{\cA_0}$ is an armature of $C$ over $F$.
\end{proof}

\subsection{Proof of the main result}
Recall that the division algebras Brauer equivalent to $R'$ and $R$ are respectively $E'$ and $E$ by Lemma \ref{lm3.1}.
\begin{proof}[Proof of Theorem \ref{thm3.1}]
(i) $\Rightarrow$ (ii):  we give the proof for $E$; the proof for $E'$ follows because if $E'$ has an armature then $E = E' \otimes_{L'} L$  has an isomorphic armature. Assume that $E$ has  an armature $\cA$. Let $c\in \cA$  and let $c_\rho z_\rho\otimes y_\rho$ be the leading term of a representative  $x_c$ of $c$ in $E$. That is, $x_c=c_\rho z_\rho\otimes y_\rho+ x'_c$ with  $w(x_c)= w(c_\rho z_\rho\otimes y_\rho)$ and $w(x'_c)> w(x_c)$. Define the map 
\[
\nu\,:\,\cA\,\longrightarrow\, A^{\times}/F^{\times}
\]
by
\[
c=x_c.L^\times\,\longmapsto\,  c_\rho z_\rho.F^{\times}.
\]
If $y_c$ is another representative of $c$, we have $y_c=\ell x_c$ for some $\ell\in L\m$.  Note that the leading term of $y_c$ is the leading term of $\ell$ multiplied by $c_\rho z_\rho\otimes y_\rho$ (see the proof of Lemma \ref{lm3.2}); moreover, the leading term of $\ell$ lies in $F\m$.  One deduces that  $\nu(x_c.L\m)= \nu(y_c.L\m)$. So $\nu$ is well-defined. 

We show that $\nu(\cA)$ is an armature of $A$: first, we claim that $\nu$ is an injective homomorphism. Indeed,   let $a,b\in\cA$ with respective representatives $x_a$ and $x_b$. Let $c_\sigma z_\sigma\otimes y_\sigma$ and $d_\tau z_\tau\otimes y_\tau$ be  the leading terms of $x_a$ and $x_b$ respectively.  As showed in the proof of Lemma \ref{lm3.2}, the leading term of $x_ax_b$ is $(c_\sigma z_\sigma\otimes y_\sigma)(d_\tau z_\tau\otimes y_\tau)$. On the other hand, it follows by (\ref{eq3.1}), (\ref{eq3.2}), (\ref{eq3.2.0}) and Lemma \ref{lm3.1} that
\[
(c_\sigma z_\sigma\otimes y_\sigma)(d_\tau z_\tau\otimes y_\tau)=c_\sigma (z_\sigma d_\tau z_\sigma\inv) f(\sigma,\tau)c(\sigma,\tau)z_{\sigma\tau}\otimes y_{\sigma\tau}
\]
for some $f(\sigma,\tau)\in  L^\times$ and some  $c(\sigma,\tau)\in C^\times$, and $z_\sigma d_\tau z_\sigma\inv\in C$. 

Since $x_{ab}=x_ax_b\bmod L^\times$  (because $a,b\in\cA$), we may take $x_ax_b$ as a representative of $ab$, so
\[
x_{ab}= x_ax_b=  c_\sigma (z_\sigma d_\tau z_\sigma\inv) c(\sigma,\tau)z_{\sigma\tau}\otimes y_{\sigma\tau}+ x_{ab}'  \quad \text{ with }\quad  w(x_{ab}') >w(x_{ab}).
\]
Hence, it follows by the definition of $\nu$ that
\begin{eqnarray*}
\nu(ab) & = &c_\sigma (z_\sigma d_\tau z_\sigma\inv) c(\sigma,\tau)z_{\sigma\tau} \bmod F^\times\\
& = & (c_\sigma z_\sigma)(d_\tau z_\tau) \bmod F^\times\\
& = & \nu(a)\nu(b).
\end{eqnarray*}
Therefore, $\nu$ is a homomorphism.

For the injectivity, we start out  by proving that the pairing $\langle\,,\,\rangle$ is an isometry for $\nu$: by definition 
\[
\langle a,b \rangle=x_ax_bx_a^{-1}x_b^{-1}=\zeta \quad \text{ for some }\quad \zeta\in \mu(F).
\]
Hence
\[
x_ax_b= (c_\sigma z_\sigma\otimes y_\sigma)(d_\tau z_\tau\otimes y_\tau)+(x_ax_b)' =\zeta x_b x_a=\zeta (d_\tau z_\tau\otimes y_\tau)(c_\sigma z_\sigma\otimes y_\sigma)+\zeta(x_bx_a)',
\]
with $w((x_ax_b)')=w((x_bx_a)')> w(x_ax_b)$. Since $y_\sigma$ and $y_\tau$ commute,  it follows that
\[
(c_\sigma z_\sigma)(d_\tau z_\tau)=\zeta (d_\tau z_\tau)(c_\sigma z_\sigma).
\]
Therefore
\[
\langle \nu(a),\nu(b) \rangle=(c_\sigma z_\sigma)(d_\tau z_\tau)(c_\sigma z_\sigma)\inv(d_\tau z_\tau)\inv=\zeta.
\]
The pairing $\langle\, ,\,\rangle$ is then an isometry for $\nu$. Now, to see that $\nu$ is an injection, let $a\in\cA$ be such that $\nu(a)=1$. Since $\langle\, ,\,\rangle$ is an isometry for $\nu$,  for all $b\in\cA$, one has
\[
1= \langle \nu(a),\nu(b) \rangle= \langle a,b \rangle.
\]
We infer that $a=1$ because the pairing is nondegenerate on $\cA$. It follows that $\nu(\cA)$ is an abelian subgroup of $A\m/F\m$ isomorphic to $\cA$. Consequently, $\nu(\cA)$ is an armature of $A\m/F\m$  isomorphic to $\cM$ ($\simeq \cA$ by hypothesis) since  $\deg E=\deg A$ by Lemma \ref{lm3.1}. 

Now, we prove that $\text{\sf Kum}(M/F)\subset \nu(\cA)$. Since $\overline{\cA_0}$ is an armature of $C$ by Proposition \ref{prop3.3}, it follows by \cite[Lemma 2.5]{TW87} that rad$(\overline{\cA_0})$ is an armature of the center of $C$ which is $M$. The extension $M/F$ being a Kummer extension, Examples \ref{ex3.1} (b) indicates that  $\text{rad}(\overline{\cA_0})=\text{\sf Kum}(M/F)$. Therefore, we have $\text{\sf Kum}(M/F)\subset \nu(\cA) $ since $\nu$ is the identity on $\cA_0$.

(ii) $\Rightarrow$ (i):  let $\cB$ be an armature of $A$ isomorphic to $\cM$ and containing $\text{\sf Kum}(M/F)$ as a totally isotropic subgroup. We construct an isomorphic armature in $E'$. Note that if $E'$ has an armature, then $E$ also has an armature. For each $\sigma\in G$, set
\[
\cB_\sigma=\{a\in \cB\, | \, \langle a,b\rangle=\sigma(x_b)x_b\inv \text{ for all } b\in \text{\sf Kum}(M/F)\}.
\]
One easily checks that  $\cB_{\id}= \text{\sf Kum}(M/F)^\perp$ and for $a,c\in \cB_\sigma$, $ac\inv\in \cB_{\id}$. The sets $\cB_\sigma$ are  the cosets of $\cB_{\id}$ in $\cB$. So, we have the disjoint union  $ \cB=\bigsqcup_{\sigma\in G} \cB_\sigma$. On the other hand, for $a\in \cB_\sigma$ and $b\in \text{\sf Kum}(M/F)$, comparing the equality $\langle a,b\rangle=\sigma(x_b)x_b\inv$ and the definition $\langle a,b\rangle=x_ax_bx_a\inv x_b\inv$ we get $x_ax_b=\sigma(x_b)x_a\inv$; this  implies  $\cB_\sigma\subset C\m z_\sigma/F\m$. Now, let us denote 
\[
\cB_\sigma'=\{(x_{a_\sigma} \otimes y_\sigma).L^{' \times}\,\,|\,\, a_\sigma\in \cB_\sigma \} \quad \text{and}\quad \cB'=\bigsqcup_{\sigma\in G}\cB'_\sigma.
\] 
Note that $\cB_\sigma'\subset C_{L'}\m(z_\sigma\otimes y_\sigma)/L^{' \times}$ and we readily check that $\cB'$ is a subgroup of $E^{' \times}/L^{' \times}$.  We claim that $\cB'$ is an armature of  $E'$:  since $\deg A=\deg E'$, it follows by the definition of $\cB'$ that $|\cB'|=\dim E'$. Moreover, as in the part (2) of the proof of Proposition \ref{prop3.3}, one verifies that the representatives  of the elements of $\cB'$ in $E'$ are linearly independent over $L'$. It remains to show that $\cB'$ is commutative. Let $a_\sigma\in \cB_\sigma$ and $d_\tau\in \cB_\tau$ with $\sigma,\tau\in G$. By (\ref{eq3.2.0}),  $y_\sigma y_\tau = y_\tau y_\sigma$ because $f(\sigma,\tau)=f(\tau,\sigma)$. Furthermore, taking  $x_{a_\sigma}x_{d_\tau}$ as a representative of $a_\sigma d_\tau$ (since $\cB$ is an armature),  we have $  x_{a_\sigma}x_{d_\tau}= x_{a_\sigma d_\tau} =x_{d_\tau}x_{a_\sigma}$. The commutativity of $\cB'$ follows; and therefore $\cB'$ is an armature of $E'$.

Using the same arguments as above, we see that the map $\cB' \to \cB$ that carries $(x_{a_\sigma}\otimes y_\sigma).L^{' \times}$ to $x_{a_\sigma}.F\m$ is an isomorphism. Consequently, the armature $\cB'$ is also isomorphic to $\cM$. This concludes the proof.
\end{proof}
%%%%%%%%%%%%
\begin{proof}[Proof of Corollary \ref{thm3.2}]
(i) $\Rightarrow$ (ii): as in the proof of Theorem \ref{thm3.1}, it is enough to give the proof for $E$. Assume that $E$ decomposes into a tensor product of symbol algebras of degree $p$. Recall that if $E$ decomposes into a tensor product of symbol algebras of degree $p$ then $E$ has an armature of exponent $p$ (see \cite[Prop. 2.7]{TW87}).  It follows by Theorem \ref{thm3.1} that $A$ decomposes into a tensor product of symbol algebras of degree $p$.  More precisely, if $\cA$ is an armature of $E$ of exponent $p$, we showed that $\nu(\cA)$ is an armature of $A$ isomorphic to $\cA$ and $\text{\sf Kum}(M/F)\subset \nu(\cA)$.  Now, let $x_1,\ldots, x_r\in A$ be such that  $k_i= F(x_i)$. The subgroup generated by $(x_i F^{\times})$ for $i=1,\ldots, r$ is $\text{\sf Kum}(M/F)$. 
The exponent of $\nu(\cA)$ being $p$, we may view $\nu(A)$ as a vector space over the field with $p$ elements. Since $M$ is a field, the elements $e_1:=x_1F^\times,\ldots, e_r:= x_rF^\times$ are linearly independent in $\nu(\cA)$.  On the other hand, $M$ being commutative, the subspace spanned by $e_1, \ldots, e_r$ is totally isotropic with respect to $\langle\,,\,\rangle$. It follows then by Proposition \ref{prop3.1} that there are $f_1,\ldots, f_r, e_{r+1}, f_{r+1},\ldots, e_n, f_n$ in $\nu(\cA)$ such that $\{e_1, f_1,\ldots, e_n, f_n \}$ is a symplectic base of $\nu(\cA)$. Expressing $f_i= y_iF^\times$ for $i=1,\ldots,r$ and $\nu(\cA)= \cA_1\times\ldots\times\cA_n$, where $\cA_i=(e_i)\times(f_i)$ for $i=1,\ldots,n$, we get
\[
A \simeq  (k_1,\sigma_1, \delta_1)\otimes_F\cdots\otimes_F(k_r,\sigma_r, \delta_r)\otimes_F F[\cA_{r+1}]\otimes_F\cdots\otimes_F F[\cA_n]
\]
with $\delta_i=y_i^p$ for $i=1,\ldots,r$. 

(ii) $\Rightarrow$ (i): assume that $A$ decomposes as
\[
A\simeq (k_1,\sigma_1, \delta_1)\otimes_F\cdots\otimes_F(k_r,\sigma_r, \delta_r)\otimes_F A_{r+1}\otimes_F\cdots \otimes_F A_n
\]
for some $\delta_1,\ldots,\delta_r\in F^\times$ and some symbols subalgebras $A_{r+1},\ldots, A_n$ of $A$. We give the proof for $R$. The same argument is valid for $R'$. We have 
\begin{multline*}
R= A\otimes_F (k_1\otimes_FL, \sigma_1\otimes\text{id},t_1)\otimes_L\cdots\otimes_L(k_r\otimes_FL, \sigma_r\otimes\text{id},t_r)\sim \\
(k_1\otimes_FL, \sigma_1\otimes\text{id},\delta_1t_1)\otimes_L\cdots\otimes_L(k_r\otimes_FL, \sigma_r\otimes\text{id},\delta_rt_r)\otimes_F A_{r+1}\otimes_F\cdots \otimes_F A_n
\end{multline*}
(see for instance \cite[\S 10]{Dra83}). Since this latter algebra  has the same degree as $A$ and $\deg(A)=\deg(E)$ by Lemma \ref{lm3.1}, it is isomorphic to $E$. The proof is complete.
\end{proof}

%%%%%%%%%%
\section{Square-central elements}\label{square-central}\label{sect4}
Let $A$ be a central simple $F$-algebra of exponent $2$ and let $g\in A^{\times}-F$ be a square-central element. The purpose of this section is to investigate conditions for  $g$ to be in a quaternion subalgebra of $A$ and to give examples of tensor products of quaternion algebras containing a square-central element which is in no quaternion subalgebra.
%%%%%%%
\subsection{The algebra $A$ is not a division algebra}
Here we distinguish two cases, according to whether $g^2\in F^{\times 2}$ or $g^2\notin F^{\times 2}$. Actually, we will not need to mention in the  following proposition that $A$ is not a division algebra because this is encoded by the fact that $g\in A^{\times}-F^{\times}$ and $g^2\in F^{\times 2}$. Indeed, if $g^2=\lambda^2$ with $\lambda\in F^{\times}$ then $(g-\lambda)(g+\lambda)=0$; this means that $A$ is not division.
\begin{proposition}\label{prop1.1}
Let $A$ be a central simple $F$-algebra and let $g\in A^{\times}-F^{\times}$ be such that $g^2=\lambda^2$, $\lambda\in F^{\times}$. The element $g$ is in a quaternion subalgebra  of $A$ if and only if $\dim(g-\lambda)A=\dim(g+\lambda)A$.  If the characteristic of $F$ is $0$, this condition holds if and only if the reduced trace $\Trd_A(g)$ of $g$ is zero.
\end{proposition}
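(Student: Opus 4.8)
The plan is to exploit the fact that $g^2=\lambda^2$ forces $A$ to split off a nontrivial idempotent decomposition: setting $e_+=\tfrac{1}{2\lambda}(g+\lambda)$ and $e_-=\tfrac{1}{2\lambda}(\lambda-g)$ (in characteristic $\ne 2$; more care is needed if $\ch F=2$, but here the characteristic-$0$ part of the statement is what matters for the trace criterion) one has $e_++e_-=1$, $e_+e_-=e_-e_+=0$, and $e_\pm^2=e_\pm$. Thus $(g-\lambda)A=e_-A$ and $(g+\lambda)A=e_+A$ are complementary right ideals, and the condition $\dim(g-\lambda)A=\dim(g+\lambda)A$ says exactly that these two idempotents have equal rank, i.e.\ each has ``rank half of $\deg A$'' in the matrix picture $A\otimes_F F_{\sep}\simeq M_{\deg A}(F_{\sep})$.

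First I would establish the easy direction: if $g$ lies in a quaternion subalgebra $Q=(\lambda^2,\mu)$ of $A$, then inside $Q$ the element $g$ is conjugate to $-g$ (there is $j\in Q$ with $jgj^{-1}=-g$), and conjugation by $j$ (extended to an inner automorphism of $A$ via Skolem--Noether) carries $(g-\lambda)A$ isomorphically onto $(g+\lambda)A$, forcing the dimensions to agree. For the converse, suppose $\dim(g-\lambda)A=\dim(g+\lambda)A$; then $e_+$ and $e_-$ are conjugate in $A$ (two idempotents in a central simple algebra are conjugate iff the corresponding right ideals have the same dimension — a standard fact, e.g.\ from Pierce), so there is $u\in A^\times$ with $ue_+u^{-1}=e_-$, equivalently $ugu^{-1}=-g$. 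Then I would set $h=u^2$; since conjugation by $u$ negates $g$, conjugation by $h$ fixes $g$, and a short computation shows $F[g,u]$ is a quaternion algebra: $g^2=\lambda^2\in F^\times$, $ug=-gu$, and after rescaling $u$ one can arrange $u^2\in F^\times$ — this last normalization is the point that needs a little argument (one checks $u^2$ commutes with both $g$ and $u$, hence lies in the center of $F[g,u]$; if that center is larger than $F$ one adjusts $u$). So $g\in F[g,u]\simeq(\lambda^2,u^2)$, a quaternion subalgebra.

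For the trace criterion in characteristic $0$: the reduced characteristic polynomial of $g$ divides $(X-\lambda)(X+\lambda)$ raised to a power, so the eigenvalues of $g$ (over a splitting field) are $\lambda$ with multiplicity $m_+=\dim(g+\lambda)A/\deg A$ and $-\lambda$ with multiplicity $m_-=\dim(g-\lambda)A/\deg A$, where $m_++m_-=\deg A$. Hence $\Trd_A(g)=\lambda(m_+-m_-)$, which vanishes iff $m_+=m_-$ iff $\dim(g-\lambda)A=\dim(g+\lambda)A$. (In characteristic $0$, $\lambda\ne 0$ and $\deg A$ invertible, so no divisibility obstruction arises; this is exactly where the characteristic hypothesis is used.)

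The main obstacle I anticipate is the normalization step in the converse direction — upgrading ``there exists $u$ with $ugu^{-1}=-g$'' to ``there exists such a $u$ with $u^2\in F^\times$'', so that $F[g,u]$ is genuinely a quaternion algebra rather than merely containing $g$ in a skew pair. The resolution is to observe that $F[g]\simeq F\times F$ (since $g^2=\lambda^2$ with $\lambda\in F^\times$) and that $u$ normalizes $F[g]$ swapping its two idempotents; the subalgebra $B=F[g,u]$ is then a crossed-product-type order over $F[g]$ of degree $2$, and one argues directly that $B$ is central simple of dimension $4$ over $F$ — equivalently, that $u$ can be chosen with $u^2\in F^\times$ by replacing $u$ with $u\cdot c$ for a suitable $c\in F[g]^\times$ fixed by the swap, i.e.\ $c\in F^\times$, after first absorbing the ``diagonal'' part of $u^2$. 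I would carry this out carefully since it is the one place the argument is not purely formal.
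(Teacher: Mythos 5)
Your approach is genuinely different from the paper's: you encode the $\pm\lambda$\mbox{-}eigenspace decomposition via the idempotents $e_{\pm}=\tfrac{1}{2\lambda}(\lambda\pm g)$ and invoke the fact that idempotents generating right ideals of equal dimension are conjugate, producing $u\in A^\times$ with $ugu^{-1}=-g$; the quaternion subalgebra is then to be $F[g,u]$ after normalizing $u$. The paper instead identifies $A\simeq\End_D(V)$, diagonalizes $g$ on $V=V_+\oplus V_-$, deduces $r=s$ from the dimension hypothesis, and simply exhibits the anticommuting element $f$ as the block matrix with identity off-diagonal blocks, so $f^2=1$ by inspection and no abstractly-given $u$ ever has to be normalized. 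Your forward direction and your characteristic-$0$ trace computation are both correct and match the paper's (a small aside: conjugation by $j\in Q\subset A^\times$ is already an inner automorphism of $A$, so invoking Skolem--Noether there is unnecessary).

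The gap is the normalization step in the converse, and the fix you sketch does not work. You propose replacing $u$ by $uc$ with $c\in F[g]^\times$, ultimately with $c\in F^\times$. But $F[g]\cong F\times F$, so such a $c$ has the form $c=c_1e_++c_2e_-$ with $c_1,c_2\in F^\times$; writing $u$ in the block form forced by $ue_+u^{-1}=e_-$, say $u=\bigl(\begin{smallmatrix}0&a\\ b&0\end{smallmatrix}\bigr)$ relative to the eigenspace decomposition of $V$ over $D$, one finds $(uc)^2=c_1c_2\,u^2$. This merely rescales $u^2=\diag(ab,ba)$ by a central scalar and cannot make a non-scalar $u^2$ scalar. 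The correct adjustment is by an element of the full centralizer $C_A(g)=e_+Ae_+\oplus e_-Ae_-$, which is strictly larger than $F[g]$: taking $c=\diag(1,a^{-1}b^{-1})$ gives $(uc)^2=1$, and since $c$ commutes with $g$ the relation $(uc)g(uc)^{-1}=-g$ is preserved, so $F[g,uc]\simeq(\lambda^2,1)\simeq M_2(F)$ is the desired (necessarily split) quaternion subalgebra containing $g$. This is exactly the freedom the paper's explicit choice of $f$ exploits silently. Once you enlarge the pool of normalizers from $F[g]^\times$ to $C_A(g)^\times$, your argument closes; as written, it does not.
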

\begin{proof}
We can write $A\simeq \text{End}_D(V)$ where $D$ is a division algebra Brauer equivalent to $A$ and $V$ is some right $D$-vector space. Suppose there is a quaternion $F$-subalgebra $Q$ of $A$ such that $g\in Q$. Then $A=Q\otimes C_AQ$, where $C_AQ$ is the centralizer of $Q$ in $A$. Since $g^2=\lambda^2$ with $\lambda\in F^{\times}$, we may identify $Q$ with  $M_2(F)$ in such a way that $g$ is the diagonal matrix diag$(\lambda,-\lambda)$. Computations show that $\dim(g-\lambda)A=\dim(g+\lambda)A = 2\dim C_AQ$.

Conversely, suppose $\dim(g-\lambda)A=\dim(g+\lambda)A$. Let  $V_+$ and $V_-$ be  the $\lambda$-eigenspace and $-\lambda$-eigenspace of $g$ respectively. For all $u\in V$, we have $u=\frac{1}{2}(u+\lambda^{-1}g(u))+\frac{1}{2}(u-\lambda^{-1}g(u))\in V_+ +V_-$ and $V_+ \cap V_-=\{0\}$. Hence $V=V_+ \oplus V_-$. Denote by $r$ and $s$ the dimensions of $V_+$ and $V_-$ respectively. Since $g^2=\lambda^2$, $g$ is represented in $A\simeq \text{End}_D(V)$ by the diagonal matrix $\text{diag}(\lambda,\ldots,\lambda,-\lambda,\ldots,-\lambda)$ where the number of $\lambda$ is $r$ and the number of $-\lambda$ is $s$. Computations show that $\dim(g-\lambda)A=s\dim D$ and $\dim(g+\lambda)A=r\dim D$; therefore the hypothesis yields $r=s$. The endomorphism $f$ whose matrix is the block matrix  
$
f=\begin{pmatrix}
0 & 1\\ 1 & 0 
\end{pmatrix},
$
where each block is an $r\times r$ matrix, anticommutes with $g$ and is square-central. It follows that $g$ lies in the split quaternion subalgebra of $A$ generated by $g$ and $f$.

Now suppose the characteristic of $F$ is $0$. The element $g$ is represented by diag$(\lambda,\ldots,\lambda,-\lambda,\ldots,-\lambda)$. So $\Trd_A(g)=(r-s)\lambda \deg D$ and $\dim(g-\lambda)A=s\dim D$ and $\dim(g+\lambda)A=r\dim D$. Therefore $\Trd_A(g)= 0$ if and only if $\dim(g-\lambda)A=\dim(g+\lambda)A$. The proof is complete.
\end{proof}
If the characteristic of $F$ is positive, the hypothesis on the trace does not suffice as we observe in the following counterexample:
\begin{contrexample}\label{cex3.1}
Assume that $F=\mathbb F_3$, the field with three elements, and take $A=M_8(F)$. The  diagonal matrix $g= \text{diag}(1,\ldots, 1, -1)$ is such that $g^2=1$ and the trace of $g$ is $0$. But Proposition \ref{prop1.1} shows that $g$ is not in a quaternion subalgebra of $A$ since $\dim(g+1)A=56\ne\dim(g-1)A=8$.
\end{contrexample}
\begin{proposition}\label{prop1.2}
Let $A$ be a central simple $F$-algebra and let $g\in A^{\times}-F^{\times}$ be such that $g^2=a\in F^{\times}-F^{\times 2}$. The element $g$ lies in a split quaternion subalgebra of $A$ if and only if $\frac{\deg A}{\ind(A)}$ is even.
\end{proposition}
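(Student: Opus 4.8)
The plan is to reduce to the Wedderburn picture $A\simeq\End_D(V)$, where $D$ is the division algebra Brauer-equivalent to $A$ and $V$ is a right $D$-vector space of dimension $n:=\deg A/\ind A$, and to reformulate the problem in terms of the $D$-module $V$ together with the $D$-linear map $g$. We work in characteristic $\ne 2$, so that $K:=F(g)\simeq F[X]/(X^2-a)$ is a separable quadratic field extension of $F$; since $g$ commutes with the $D$-action, the pair $(D,g)$ makes $V$ a module over the central simple $K$-algebra $S:=D\otimes_F K$, with $g$ acting as $\sqrt a$.

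First I would establish the following reformulation: \emph{$g$ lies in a split quaternion $F$-subalgebra of $A$ if and only if there is a $D$-submodule $U\subseteq V$ with $V=U\oplus gU$.} For the forward implication, a split quaternion $Q\ni g$ gives $A\simeq Q\otimes_F C_A(Q)\simeq M_2\bigl(C_A(Q)\bigr)$; since $g\notin F$ and $g^2=a\notin F^{\times 2}$, the image of $g$ in $Q\simeq M_2(F)$ has characteristic polynomial $X^2-a$, so the isomorphism $Q\simeq M_2(F)$ may be chosen so that $g$ becomes the companion matrix of $X^2-a$, and then $U:=E_{11}V$ satisfies $V=U\oplus gU$. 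For the reverse implication, write $V_0:=U$, $V_1:=gU$, $\psi_0:=g|_{V_0}\colon V_0\to V_1$ and $\psi_1:=g|_{V_1}\colon V_1\to V_0$, so that $\psi_1\psi_0=a\cdot\id_{V_0}$ and $\psi_0\psi_1=a\cdot\id_{V_1}$; the $D$-linear map $f$ with $f|_{V_0}=\psi_0$ and $f|_{V_1}=-\psi_1$ then satisfies $f^2=-a$ and $fg=-gf$, so (by the universal property of a quaternion algebra) $g$ and $f$ generate an $F$-subalgebra of $A$ isomorphic to $(a,-a)$, which is split because $-a=N_{K/F}(\sqrt a)$ is a norm from $K$.

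Next I would translate the existence of such a $U$ into the parity condition. A $D$-submodule $U$ with $V=U\oplus gU$ exists if and only if $V\simeq U_0\otimes_F K$ as $S$-modules for some right $D$-vector space $U_0$: given such $U$, the $S$-linear map $U\otimes_F K\to V$, $u\otimes 1\mapsto u$, $u\otimes\sqrt a\mapsto gu$, is an isomorphism; and conversely $U_0\otimes_F K$ is visibly of this shape. Now $S=D\otimes_F K$ is a simple $K$-algebra, so an $S$-module is of the form $U_0\otimes_F K$ exactly when its $K$-dimension is a multiple of $\dim_F D$ (any $S$-module whose $K$-dimension is such a multiple is free over $S$, hence isomorphic to $D^{k}\otimes_F K$, while $\dim_K(U_0\otimes_F K)=\dim_F U_0$ is always such a multiple). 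Since $g$ makes $V$ a $K$-vector space, $\dim_K V=\tfrac12\dim_F V=\tfrac12\,n\,\dim_F D$, and this is a multiple of $\dim_F D$ precisely when $n$ is even. Combined with the reformulation of the previous paragraph, this gives the proposition.

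The step I expect to be the crux is the reformulation ``$g$ in a split quaternion subalgebra $\iff$ $V=U\oplus gU$ for some $D$-submodule $U$'', and in particular its reverse direction, where one must exhibit the second generator $f$, verify $f^2=-a$ and $fg=-gf$, and then recognize that $(a,-a)$ is split; once this bridge is in place the parity count is routine. Incidentally, the forward direction of the whole statement already drops out of $A\simeq M_2\bigl(C_A(Q)\bigr)$, which forces $\ind A=\ind C_A(Q)$ and $\deg A=2\deg C_A(Q)$, hence $n=2\bigl(\deg C_A(Q)/\ind C_A(Q)\bigr)$ is even.
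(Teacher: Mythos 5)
Your proof is correct, but it takes a genuinely different route from the paper's. The paper's argument for the nontrivial direction is a quick application of the Skolem--Noether theorem: when $n:=\deg A/\ind(A)$ is even one writes $A\simeq M_2(F)\otimes A'$, observes that $g'=\bigl(\begin{smallmatrix}0&1\\ a&0\end{smallmatrix}\bigr)\in M_2(F)$ satisfies $g'^2=a$, and concludes by conjugating $g$ onto $g'$ (Skolem--Noether applied to the isomorphism $F[g]\simeq F[g']$); the forward direction is the same $A\simeq M_2(F)\otimes C_A(Q)$ decomposition you note at the end. Your approach replaces the conjugation step by a module-theoretic reformulation --- existence of a $D$-submodule $U$ with $V=U\oplus gU$, equivalently $V\simeq U_0\otimes_F K$ over $S=D\otimes_F K$ --- followed by a dimension count over $K$. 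That route is longer but more explicit and structural: it produces the second generator $f$ inside $A$ directly, identifies the split quaternion subalgebra as $(a,-a)$ (split because $-a=N_{K/F}(\sqrt{a})$), and isolates the precise module-theoretic obstruction, rather than appealing to conjugation. The small price you pay is the assumption $\mathrm{char}\,F\neq 2$, needed for the separability of $K/F$ and the semisimplicity of $S$; the Skolem--Noether argument works verbatim in characteristic $2$ as well, though this costs nothing here since Section~\ref{sect4} treats quaternion algebras as symbol algebras with $\zeta=-1$ and is therefore implicitly in characteristic $\neq 2$.
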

\begin{proof}
If $g\in M_2(F)\subset A$, then $A=M_2(F)\otimes C$ where $C$ is the centralizer of $M_2(F)$ in $A$. Hence, $\frac{\deg A}{\ind(A)}$ is even. Conversely,  assume $\frac{\deg A}{\ind(A)}$ is even. So, we may write $A\simeq M_2(F)\otimes A'$ for some algebra $A'$ Brauer equivalent to $A$.  Set 
$
g'=\begin{pmatrix}
0 & 1\\ 
a & 0 
\end{pmatrix}
$; we have $g'\in M_2(F)\subset A$ and $g'^2=a$. By the Skolem-Noether Theorem, $g$ and $g'$ are conjugated. It follows that $g$ is in a split quaternion subalgebra of $A$ since $g'\in M_2(F)$. 
\end{proof}
\subsection{The algebra $A$ is a division algebra}
Let $A$ be a division algebra and let $x\in A^{\times}-F^{\times}$. Recall that, $A$ being a division algebra, we have necessarily $x^2\in F^{\times}-F^{\times 2}$. Here we argue on the degree of the division algebra.
\subsection*{Degree $4$}
The following result is due to Albert and many proofs exist in the literature (see for instance \cite{Rac74}, \cite[Prop. 5.2]{LLT93}, \cite[Thm. 4.1]{Bec04}). We propose the following proof for  the reader's convenience. 
\begin{proposition}\label{prop1.3}
Suppose $A$ is a central  simple algebra over $F$ of degree $4$ and exponent $2$. Let $x\in A^{\times}-F^{\times}$ be a square-central element with $x^2\not\in F^{\times 2}$. Then, $x$ is in a quaternion $F$-subalgebra of $A$.
\end{proposition}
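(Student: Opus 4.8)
The plan is to pass to a splitting field of a quaternion subalgebra in a controlled way and then use a dimension/eigenspace argument in the spirit of Proposition \ref{prop1.1}. Concretely, write $x^2 = a \in F^\times \setminus F^{\times 2}$ and set $K = F(x) = F(\sqrt{a})$, a quadratic field extension of $F$ sitting inside $A$. If $x$ already lies in a quaternion $F$-subalgebra we are done, so the point is to produce an element $y \in A^\times$ anticommuting with $x$; then $F[x,y]$ is a quaternion subalgebra containing $x$. First I would observe that it suffices to find $y \in C_A(F)$ with $yxy^{-1} = -x$, equivalently an element of $A$ realizing the nontrivial automorphism of $K/F$ by conjugation.

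The key step is to find such a $y$. Let $C = C_A(K)$ be the centralizer of $K$ in $A$; since $[K:F]=2$ and $\deg A = 4$, $C$ is a central simple $K$-algebra of degree $2$, hence either $M_2(K)$ or a quaternion division algebra over $K$. Because $A$ has exponent $2$, its corestriction / the norm $N_{K/F}[C]$ is split, and $[C] \in \Br(K)$ satisfies $[C] = \mathrm{res}_{K/F}[A]$ up to the appropriate identification; in particular $[C]$ lies in the image of restriction, so $\sigma([C]) = [C]$ where $\sigma$ generates $\Gal(K/F)$ — that is, $C$ and its conjugate ${}^\sigma C$ are isomorphic as $F$-algebras. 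By Skolem--Noether applied inside $A$ (using that $A = K \cdot C_A(C)$-type decompositions and that conjugation by $x$ already induces $\sigma$ on $K$), conjugation by any preimage of $\sigma$ must normalize $C$; one then checks that one can choose the preimage $y$ to satisfy $y^2 \in F^\times$, using that exponent $2$ forces the relevant obstruction (a class in $F^\times/N_{K/F}(K^\times)$ coming from $y^2$) to vanish. That gives the anticommuting square-central $y$, and $F[x,y] \cong (a, y^2)_F$ is the desired quaternion subalgebra.

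The main obstacle I expect is the last adjustment: producing $y$ that is genuinely \emph{square-central} over $F$, not merely an element normalizing $C$ and inducing $\sigma$. Any two such normalizing elements differ by an element of $C^\times$, and $y^2 \in C^\times$ a priori only lies in the center $K$ after correction; showing one can scale $y$ by an element of $C^\times$ so that $y^2 \in F^\times$ is exactly where the exponent-$2$ hypothesis must be used, presumably via the factor set / crossed-product description of $A$ relative to $K$ (so $A$ is the cyclic-type algebra $(K/F, \sigma, b)$-generalized product with $C$ playing the role of the "scalar" ring), together with the fact that $2[A] = 0$ kills the relevant Brauer obstruction. An alternative, cleaner route — which I would try in parallel — is to use Albert's theorem that a degree-$4$ exponent-$2$ algebra is a biquaternion algebra $Q_1 \otimes_F Q_2$; then $K \hookrightarrow A$ together with the fact that $K$ is quadratic lets one invoke a "common slot"-type lemma to move $K$ into one of the factors, though this essentially reproves the statement and one must be careful not to argue circularly. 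Either way, once the anticommuting $y$ is in hand the conclusion is immediate.
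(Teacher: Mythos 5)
Your setup is the same as the paper's: pass to $K=F(x)$, look at the quaternion $K$-algebra $C=C_A(K)$, and note that $\cor_{K/F}[C]=\cor_{K/F}\res_{K/F}[A]=2[A]=0$ because $A$ has exponent $2$. That much is correct. The gap is exactly where you flag it: you do not actually show that the Skolem--Noether element $y$ inducing $\sigma$ on $K$ can be rescaled so that $y^2\in F^\times$. Your description of the obstruction as ``a class in $F^\times/N_{K/F}(K^\times)$'' is not justified and I doubt it is even the right group: replacing $y$ by $cy$ with $c\in C^\times$ changes $y^2$ to $c\,\theta(c)\,y^2$, where $\theta=\mathrm{Int}(y)\vert_C$, so the relevant ``norm'' is a twisted norm on the \emph{noncommutative} algebra $C$, not $N_{K/F}$ on $K^\times$. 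Working out when $y^2$ can be pushed into $F^\times$ is precisely the content of Albert's descent theorem for quaternion algebras (KMRT (2.22)): a quaternion $K$-algebra with trivial corestriction to $F$ is extended from $F$, i.e.\ $C\cong Q\otimes_F K$ for some quaternion $F$-algebra $Q$. You have the hypothesis of that theorem in hand but stop short of invoking or reproving it, so as written the argument is incomplete.

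The paper's proof uses Albert's theorem as a black box and then finishes cleanly without ever producing $y$ by hand: once $C\cong Q\otimes_F K$ with $Q\subset C\subset A$ a quaternion $F$-subalgebra, the double centralizer theorem gives $A\cong Q\otimes_F C_A(Q)$; since $x\in K=Z(C)$ commutes with $Q$, one has $x\in C_A(Q)$, which is the desired quaternion $F$-subalgebra containing $x$. This sidesteps the normalization you were struggling with. Note also that the paper treats the non-division case separately via Proposition \ref{prop1.2}; your sketch does not address it, although the descent argument does in fact go through uniformly. Your alternative route (Albert's biquaternion decomposition plus a common-slot argument) is mentioned but likewise not carried out, and as you say it risks circularity. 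In short: the framework is right, but the proposal leaves the one genuinely nontrivial step --- the descent of $C$ to $F$ --- unproved.
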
 
\begin{proof}
Note that $F(x)$ is isomorphic to a quadratic  extension of $F$ since $x^2\in F^{\times}-F^{\times 2}$. If $A$ is not a division algebra, the result follows by Proposition \ref{prop1.2}. We assume that $A$ is a division algebra. The centralizer  $C_A(x)$ of $x$ in $A$ is a quaternion algebra over $F(x)$. The algebra $C_A(x)$ is Brauer equivalent to $A_{F(x)}$ (see for instance \cite[\S 13.3]{Pie82}). Since
\[
\text{cor}_{F(x)/F}[C_A(x)]=\cor_{F(x)/F}(\res_{F(x)/F}[A])=2[A]=0
\]
 in Br$(F)$ (see for instance \cite[(3.13)]{KMRT98}), it follows from  a result  of Albert  (see  \cite[(2.22)]{KMRT98}) that there is a quaternion algebra $Q$ over $F$ such that $C_A(x)=Q\otimes_F F(x)$. Then, $A=Q\otimes C_A(Q)$ and the centralizer $C_A(Q)$ of $Q$ is a quaternion $F$-subalgebra of $A$ containing $x$. 
\end{proof}
\subsection*{Degree $8$}
Here we  give an example of a tensor product of three quaternion algebras containing a square-central element which is in no quaternion subalgebra. This example is a private communication from Merkurjev to Tignol based on the following result:
\begin{lemma}[Tignol]\label{lm1.1}
Let $A$ be a division algebra over $F$ of degree $8$ and exponent $2$. Let $x\in A^{\times}-F^{\times}$ be such that $x^2=a\in F^{\times}$. Then, there exists quaternion algebras $Q_1, Q_2, Q_3$ such that $M_2(A)\simeq Q_1\otimes Q_2\otimes Q_3\otimes(a,y)$ for some $y\in F$.
\end{lemma}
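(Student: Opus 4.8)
The plan is to reduce the statement to a decomposability result for the centralizer of the quadratic extension $F(x)$, in the spirit of the degree-$4$ argument in Proposition \ref{prop1.3} but with an extra twist because $A$ has degree $8$. First I would observe that since $x^2=a\in F^\times$ and $A$ is a division algebra, $K:=F(x)$ is a quadratic field extension of $F$ (if $a\in F^{\times 2}$ the statement is trivial, taking $Q_3=(a,y)$ split and $Q_1\otimes Q_2\simeq M_2(A)$ after absorbing a split factor). The centralizer $C:=C_A(K)$ is then a central simple $K$-algebra of degree $4$, and by the standard centralizer theory (see \cite[\S13.3]{Pie82}) it is Brauer-equivalent to $A_K=\res_{K/F}[A]$. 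The key cohomological input is that $\cor_{K/F}[C]=\cor_{K/F}\res_{K/F}[A]=2[A]=0$ in $\Br(F)$, so $C$ has trivial corestriction.

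Next I would exploit the structure of degree-$4$ algebras with trivial corestriction over a quadratic extension. By a theorem of Albert (cf. \cite[(2.22)]{KMRT98}) a central simple $K$-algebra of degree $4$ and exponent $2$ with trivial corestriction is of the form $Q_0\otimes_F K$ for some quaternion $F$-algebra $Q_0$ — but here $C$ need only have exponent dividing $4$, so this needs a little care. Since $A$ has exponent $2$, $A_K$ has exponent dividing $2$, hence $C$ (being Brauer-equivalent to $A_K$) has exponent dividing $2$; so $C$ is a biquaternion $K$-algebra with $\cor_{K/F}[C]=0$, and Albert's result applies: $C\simeq (Q_1\otimes_F Q_2)\otimes_F K$ for quaternion $F$-algebras $Q_1,Q_2$. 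Then $A$ contains $Q_1\otimes_F Q_2\otimes_F F(x)$, and since $x$ is a symbol generator of the cyclic degree-$2$ factor $F(x)=F(\sqrt a)=(a,1)$-slot, one writes $A\simeq Q_1\otimes_F Q_2\otimes_F C_A(Q_1\otimes Q_2)$ where $C_A(Q_1\otimes Q_2)$ is a quaternion $F$-algebra containing $x$; passing to $M_2(A)$ and using that a quaternion algebra containing $\sqrt a$ is of the form $(a,y)$ up to a split factor gives $M_2(A)\simeq Q_1\otimes Q_2\otimes Q_3\otimes (a,y)$.

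I expect the main obstacle to be the passage from $C$ to an $F$-form: one must be sure that $C$ really has exponent $2$ (not $4$) so that Albert's descent theorem \cite[(2.22)]{KMRT98} is applicable, and one must correctly track the dimension count so that the quaternion factor $C_A(Q_1\otimes Q_2)$ really has degree $2$ and contains $x$. The subtlety is that $x\in A$ need not lie in $C=C_A(K)$ in the naive sense used to split off a symbol — rather $x$ generates $K$, so one uses $A\simeq C_A(K)$-plus-the-cyclic-slot decomposition, i.e. $A$ is a crossed product for $K/F$ and $x$ sits in the degree-$2$ cyclic factor; writing $M_2(A)$ rather than $A$ gives room to realize the quaternion factor $(a,y)$ as a genuine tensor factor even when $A$ itself does not decompose with $x$ in a quaternion subalgebra (which, by the discussion preceding the lemma, may genuinely fail). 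The remaining steps — identifying $Q_3\otimes(a,y)$ with $C_A(Q_1\otimes Q_2)\otimes M_2(F)$ and checking Brauer equivalences — are routine bookkeeping with central simple algebras and do not require new ideas.
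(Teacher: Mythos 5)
Your argument breaks down at the key descent step, and the failure is not a technicality but precisely the phenomenon the whole paper is about. After reducing to the centralizer $C=C_A(K)$, which is a biquaternion algebra over $K=F(\sqrt a)$ with $\cor_{K/F}[C]=0$, you invoke Albert's theorem \cite[(2.22)]{KMRT98} to conclude $C\simeq (Q_1\otimes_F Q_2)\otimes_F K$. Albert's descent theorem applies to \emph{quaternion} algebras over $K$ with trivial corestriction (as it is used, correctly, in Proposition \ref{prop1.3} where the centralizer has degree $2$ over $K$), not to biquaternion algebras. Indeed, the central point of \cite{Bar}, recalled in the paragraph before Theorem \ref{application}, is that a biquaternion $K$-algebra with $\cor_{K/F}=0$ admits a descent to $F$ if and only if the invariant $\delta_{K/F}$ vanishes, and that this invariant can be nonzero. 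If $C$ did descend as you claim, then $A\simeq Q_1\otimes Q_2\otimes C_A(Q_1\otimes Q_2)$ with the last factor a quaternion $F$-algebra containing $F(x)$, so $x$ would lie in a quaternion subalgebra of $A$. That would make Corollary \ref{cor3.1} impossible and render the $M_2$-stabilization in the statement superfluous — a contradiction you even anticipate in your final paragraph without noticing that it refutes your own earlier step.

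The paper's own proof is of a different nature: it cites \cite[Thm.\ 5.6.38]{Jac96}, which shows directly that $M_2(A)$ is a tensor product of four quaternion $F$-algebras when $A$ has degree $8$ and exponent $2$ (the proof there relies on Rowen's theorem that $A$ contains a triquadratic maximal subfield $F(\sqrt{a_1},\sqrt{a_2},\sqrt{a_3})$), and observes that one of these quaternion factors can be arranged to contain a prescribed square-central element $x$. The stabilization by $M_2$ is essential and is not something one can remove by a corestriction argument at the level of $A$ itself — that is exactly the gap in your proposal.
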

\begin{proof}
It is shown in \cite[Thm. 5.6.38]{Jac96} that $M_2(A)$ is a tensor product of four quaternion algebras. The proof shows that one of these quaternion
algebras can be chosen to contain $x$.
\end{proof}
\begin{corollary}[Merkurjev]\label{cor3.1}
There exists a  decomposable $F$-algebra of degree $8$ and exponent $2$ containing a square-central element which is in no quaternion subalgebra.
\end{corollary}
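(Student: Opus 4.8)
The plan is to feed a suitable degree-$8$ exponent-$2$ division algebra into Lemma~\ref{lm1.1} and to show that the resulting tensor product of three quaternion algebras inherits the desired pathology. So I would fix a field $F$ and a division algebra $A$ of degree $8$ and exponent $2$ over $F$ admitting a quadratic subfield $k=F(\sqrt a)$ (so $a\in F^{\times}\setminus F^{\times 2}$) that is contained in no quaternion $F$-subalgebra of $A$; the existence of such $A$ is precisely the input hypothesis of Theorem~\ref{application} and is furnished by \cite{Bar} — for instance any indecomposable $A$ works, since a central simple algebra of degree $8$ is decomposable exactly when it contains a quaternion subalgebra. Applying Lemma~\ref{lm1.1} to $A$ and $x=\sqrt a$ gives quaternion $F$-algebras $Q_1,Q_2,Q_3$ and $y\in F$ with $M_2(A)\simeq Q_1\otimes_F Q_2\otimes_F Q_3\otimes_F(a,y)$, and I claim $B:=Q_1\otimes_F Q_2\otimes_F Q_3$ is the algebra we want.

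First I would record two elementary consequences. Since $[M_2(A)]=[A]$, the displayed isomorphism gives $[B]=[A]+[(a,y)]$ in $\Br(F)$, so $\exp B$ divides $2$. Since $(a,y)$ is split by $k$, we get $\res_{k/F}[B]=\res_{k/F}[A]$, i.e. $[C_B(k)]=[C_A(k)]$, where $C_A(k)$ is a division algebra of degree $4$ over $k$ because $A$ is division. In particular $\ind(B\otimes_F k)=4=\deg B/[k:F]$, so $k$ embeds in $B$; I write $\sqrt a$ for the image, a square-central element of $B$ with $(\sqrt a)^2=a\notin F^{\times 2}$.

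The heart of the matter is to show that $\sqrt a$ lies in no quaternion $F$-subalgebra of $B$. Suppose, for contradiction, that $\sqrt a\in Q'\subseteq B$ with $Q'$ a quaternion $F$-algebra. Then $k\subseteq Q'$, so by the double centralizer theorem $B=Q'\otimes_F C_B(Q')$ with $C_B(Q')$ central simple of degree $4$ over $F$, and moreover $C_B(k)=C_{Q'}(k)\otimes_F C_B(Q')=k\otimes_F C_B(Q')$, since a quadratic subfield is its own centralizer in a quaternion algebra. Comparing Brauer classes, $\res_{k/F}[C_B(Q')]=[C_B(k)]=[C_A(k)]=\res_{k/F}[A]$, so $[A]-[C_B(Q')]$ lies in $\ker\bigl(\res_{k/F}:\Br(F)\to\Br(k)\bigr)=\{[(a,c)]:c\in F^{\times}\}$; hence $A\sim C_B(Q')\otimes_F(a,c)$ for some $c\in F^{\times}$. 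This last algebra has degree $8$ and index $8$ (it is Brauer equivalent to $A$), so it is division and therefore isomorphic to $A$; but then $A$ contains the quaternion subalgebra $(a,c)\supseteq k$, contradicting our choice of $A$. Thus no quaternion $F$-subalgebra of $B$ contains $\sqrt a$.

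It remains to close the loop with Proposition~\ref{prop1.2}. Since $\sqrt a\in B^{\times}\setminus F^{\times}$, $(\sqrt a)^2=a\in F^{\times}\setminus F^{\times 2}$, and $\sqrt a$ lies in no (in particular no split) quaternion subalgebra of $B$, the quantity $\deg B/\ind B$ must be odd; being a power of $2$ it equals $1$, i.e. $\ind B=\deg B=8$, so $B$ is a division algebra and $\exp B=2$. Therefore $B=Q_1\otimes_F Q_2\otimes_F Q_3$ is a decomposable division $F$-algebra of degree $8$ and exponent $2$ containing a square-central element (equivalently, the quadratic field $k$) that lies in no quaternion $F$-subalgebra, as desired. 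The only step that is not routine Brauer-group bookkeeping is the existence of the seed algebra $A$; that is where the genuine content sits, and it is imported from \cite{Bar} (or from the inductive construction underlying Theorem~\ref{application}).
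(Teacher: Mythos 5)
Your proof is correct and follows the same overall skeleton as the paper's: start from an indecomposable degree-$8$, exponent-$2$ division algebra $A$ with square-central element $x$, feed it into Lemma~\ref{lm1.1}, and show that $B=Q_1\otimes Q_2\otimes Q_3$ inherits the pathology. Where you diverge is in the auxiliary steps, and the differences are worth noting. To see that $\sqrt a$ lies in $B$, the paper first proves $B$ is division (arguing that otherwise $B\simeq M_2(D')$ with $D'$ of degree $4$ and exponent $2$, hence decomposable by Albert, forcing $A$ to decompose), then invokes Albert's Theorem~4.22 to produce the square root of $a$ inside the division algebra $B$. You instead get the embedding of $k=F(\sqrt a)$ directly from the index criterion $\ind(B\otimes_F k)=4=\deg B/[k:F]$, which is a piece of standard Brauer-group bookkeeping and avoids Albert's theorem entirely; you then deduce that $B$ is division \emph{last}, as a consequence of Proposition~\ref{prop1.2} applied to the already-established element $\sqrt a$ that lies in no quaternion (in particular no split quaternion) subalgebra. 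This is an attractive rearrangement: it actually uses Proposition~\ref{prop1.2}, which the paper proves but never deploys in this corollary, and it replaces an external citation with machinery internal to the paper. The final contradiction is essentially the same in both treatments — the paper cancels $(a,b)\otimes(a,y)\simeq M_2((a,by))$ inside $M_2(A)$ to produce a decomposition of $A$, while you phrase it as $[A]-[C_B(Q')]\in\ker(\res_{k/F})=\{[(a,c)]\}$ and compare degrees and indices; these are two dialects for the same computation. One small notational slip: you write $[C_B(k)]$ a line before you have established that $k$ embeds in $B$; you clearly mean $\res_{k/F}[B]$ there, and the argument is unaffected. Also, the paper attributes the existence of the seed algebra to \cite{ART79} (indecomposable degree $8$, exponent $2$) together with Rowen's theorem on square-central elements, rather than to \cite{Bar}, but that is only a citation choice.
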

\begin{proof}
Let $A$ be an indecomposable $F$-algebra of degree $8$ and exponent $2$ and let $x\in A$ be such that $x^2=a\in F^{\times}$ with $x\notin F$. Such an algebra $A$ exists by \cite{ART79} and the existence of such an element $x$ follows from a result of Rowen \cite[Thm. 5.6.10]{Jac96}. Lemma \ref{lm1.1}  indicates that $M_2(A)\simeq Q_1\otimes Q_2\otimes Q_3\otimes(a,y)$ for some $y\in F$. Set $D = Q_1\otimes Q_2\otimes Q_3$. We claim that $D$ is a division algebra. Indeed, if $D$ is not a division algebra then $D\simeq M_2(D')$ where $D'$ is an algebra of degree $4$ and exponent $2$. Since an exponent $2$ and degree $4$ central simple algebra is always decomposable by a well-known result of Albert (see for instance \cite{Rac74}), we deduce that $A$ is isomorphic to a product of quaternion algebras; this contradicts our hypothesis. Hence $D$ is a division algebra. Since the algebras $D_{F(\sqrt{a})}$ and $A_{F(\sqrt{a})}$ are isomorphic and $A_{F(\sqrt{a})}$ is not a division algebra, $D_{F(\sqrt{a})}$ is not a division algebra. Then, by \cite[Thm. 4.22]{Alb61} the algebra $D$ contains an element $\alpha$ such that $\alpha^2=a$ with $\alpha \notin F$. Assume that $D$ contains a quaternion subalgebra containing $\alpha$, say $(a,b)$ for some $b\in F$. The centralizer of $(a,b)$ in $D$ is an algebra of exponent $2$ and degree $4$. Thus, we have $D\simeq H_1\otimes H_2\otimes (a,b)$ where $H_i$ are quaternion algebras. It follows that  
$
M_2(A)\simeq H_1\otimes H_2\otimes (a,b)\otimes (a,y)\simeq M_2(H_1\otimes H_2\otimes (a,yb)).
$
Whence $A\simeq H_1\otimes H_2\otimes (a,yb)$; contradiction. The algebra $D$ satisfies the required conditions. 
\end{proof}
\subsection*{Degree $2^n$, $n>3$}
In this part, we generalize Corollary \ref{cor3.1}: we are going to construct a tensor product of $n$ (with $n>3$) quaternion algebras containing a square central element which is not in a quaternion subalgebra. To do this, we use valuation theory.

Let $L=F((t_1))((t_2))$ be the iterated Laurent power series field where $t_1,t_2$ are independent indeterminates over $F$ and let $D$ be a division $F$-algebra. Set $ D'=D\otimes (t_1,t_2)_L$ and let $i,j\in D'$ be such that 
$i^2=t_1,\quad j^2=t_2 \quad \text{ and } \quad ij=-ji$.  Since $i^2=t_1$ and $j^2=t_2$, every element $f\in D'$ can be written as an iterated Laurent series in $i$ and $j$ with coefficients in $D$:
\[
f=\sum_{\begin{subarray}{l}
\beta\geq n
\end{subarray}}\sum_{\alpha\geq m_\beta}d_{\alpha,\beta}i^{\alpha}j^{\beta} \quad \text{ with } d_{\alpha,\beta}\in D \text{ and } n, m_\beta\in\Z.
\]
Define  $v\, :\, D'^{\times}\, \longrightarrow\, (\frac{1}{2}\Z)^2$ (where $(\frac{1}{2}\Z)^2$ is ordered lexicographically from right-to-left) by 
\[
v(f)=\inf\Big\{(\frac{\alpha}{2},\frac{\beta}{2})\,|\,\, d_{\alpha,\beta}\ne 0\Big\}.
\]
Computations show that $v$ is a valuation on $D'$. Actually, $v$ is the unique extension of the $(t_1,t_2)$-adic valuation on $L$ (which is Henselian). As in the  previous section, for $f\in D'^{\times}$, the leading term of $f$ is defined to be
\[
\ell(f)=d_{m,n}i^mj^n \quad \text{ where }\quad (m,n)= v(f).
\]
Straightforward computations show that
\begin{itemize}
\item[(i)] $\ell(fg)=\ell(f)\ell(g)$, for $f,g\in D'^{\times}$;
\item[(ii)] $\ell(d)=d$, for $d\in D^{\times}$;
\item[(iii)] $\ell(z)\in L $, for $z\in L^{\times}$.
\end{itemize}
We have the following generalization of Corollary \ref{cor3.1}:
\begin{proposition}\label{prop1.4}
Let $D$ be a division algebra over $F$. Let $x\in D^{\times}-F$ be a square-central element which is in no quaternion subalgebra of $D$. Then $D\otimes (t_1,t_2)_L$ has no quaternion subalgebra containing $x$. 
\end{proposition}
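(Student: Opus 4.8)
The plan is to argue by contradiction, transporting a hypothetical quaternion subalgebra of $D'=D\otimes(t_1,t_2)_L$ containing $x$ back into $D$ by means of the leading‑term map $\ell$ and properties (i)--(iii). Suppose then that $Q\subseteq D'$ is a quaternion subalgebra with $x\in Q$; its centre is $L=F((t_1))((t_2))$. First I would pin down how $x$ sits inside $Q$: since $D$ is a division algebra and $x\in D\m-F$, the identity $x^2\in F\m$ forces $x^2=a\in F\m-\fs$ (if $a=\lambda^2$ with $\lambda\in F\m$ then $(x-\lambda)(x+\lambda)=0$ in the domain $D$, impossible as $x\notin F$); in particular $a\notin L^{\times 2}$ (otherwise $a$ would be the square of the residue of some $(t_1,t_2)$‑adic unit, forcing $a\in\fs$). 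Hence $L[x]\simeq L(\sqrt a)$ is a quadratic field extension of $L$ contained in $Q$, so by Skolem--Noether (equivalently, by the standard presentation of a quaternion algebra) there is $y\in Q\m$ inducing the nontrivial automorphism of $L[x]/L$, so that $xy=-yx$; then $y^2$ commutes with $x$ and is fixed under conjugation by $y$, whence $b:=y^2\in L\m$.

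Next I would take leading terms. By (ii), $\ell(x)=x$, so $v(x)=(0,0)$; write $\ell(y)=d\,i^mj^n$ with $d\in D\m$ and $m,n\in\Z$. Recall that $i$ and $j$ centralize $D$ in $D\otimes_F(t_1,t_2)_L$ and $ji=-ij$. Applying $\ell$ to $xy=-yx$ and using (i) yields $(xd)\,i^mj^n=-(dx)\,i^mj^n$, and cancelling the invertible factor $i^mj^n$ gives $xd=-dx$ in $D$. Applying $\ell$ to $y^2=b$ and using (i) and (iii) gives
\[
\ell(b)=\ell(y)^2=(-1)^{mn}d^2t_1^mt_2^n\in L\m ,
\]
so that $d^2\in L\m$; since $D\cap L=F$ inside $D\otimes_F(t_1,t_2)_L$, in fact $d^2\in F\m$.

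Finally I would conclude. We now have $d\in D\m$ with $d^2\in F\m$, together with $x^2=a\in F\m$ and $xd=-dx$; since $\cchar F\ne 2$, the anticommutation relation forces $d\notin F$ (an element of $F$ would commute with $x$). Therefore the $F$‑subalgebra of $D$ generated by $x$ and $d$ is a homomorphic image of the quaternion algebra $(a,d^2)$, hence (quaternion algebras being simple) is isomorphic to $(a,d^2)$: a quaternion $F$‑subalgebra of $D$ containing $x$. This contradicts the hypothesis, and the proposition follows.

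The computations with $\ell$ are routine once (i)--(iii) are available; the step that genuinely uses the hypotheses is the construction of $y$ with $xy=-yx$ \emph{exactly} (not merely $xy\ne yx$) and $y^2\in L\m$. This is where one needs that $L[x]$ is a \emph{field}, i.e.\ that $a\notin\fs$ — which is precisely the point at which $D$ being a division algebra enters. Everything after that is bookkeeping of signs and $(t_1,t_2)$‑adic valuations of the monomials $i^mj^n$.
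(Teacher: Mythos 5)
Your proof is correct and follows essentially the same leading-term argument as the paper: pass from the hypothetical quaternion subalgebra to an element $y$ with $xy=-yx$ and $y^2\in L^\times$, take $\ell(y)=d\,i^mj^n$, and show $d\in D^\times$ has $d^2\in F^\times$ and anticommutes with $x$, yielding a quaternion subalgebra of $D$ containing $x$. The paper states the argument more tersely (it starts directly from the existence of such a $y$ and leaves the Skolem--Noether reduction and the verification that $a\notin L^{\times 2}$ to the reader), whereas you spell out these preliminary steps; the mathematical content is the same.
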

\begin{proof} Suppose there is $y\in D\otimes (t_1,t_2)_L$ such that $y^2\in L^\times$ and $xy=-yx$. Let $\ell(y)= d i^\alpha j^\beta$ with $d\in D^\times$ and $\alpha,\beta\in\Z$. We have $\ell(y)i^{-\alpha}j^{-\beta}=d$ and $d^2\in F\m$. Since $xy=-yx$ we have $\ell(x)\ell(y)=-\ell(y)\ell(x)$, that is, $x\ell(y)=-\ell(y)x$. Hence, $d$ anticommutes with $x$; contradiction with the choice of $x$. 
\end{proof}
%%%%%%%%%
\subsection{An application}
Corollary \ref{cor3.1} implies that if $F$ is the center of an indecomposable  algebra of degree $8$ and exponent $2$, then there exist a decomposable division algebra of degree $8$ and exponent $2$ containing a square-central element which is not in a quaternion subalgebra. Conversely, let $D$ be a division algebra of degree $8$ and exponent $2$  over $F$ and let $F(\sqrt{a})\subset D$ be a quadratic field extension of $F$ such that $F(\sqrt{a})$ is not in a quaternion subalgebra of $D$ (the algebra $D$ could be decomposable). Theorem \ref{thm3.2} shows that the division algebra Brauer equivalent to $D\otimes_F(a,t)_{F(t)}$ is an indecomposable algebra of degree $8$ and exponent $2$ over $F(t)$.

As an application, we are going now to give an example of indecomposable algebra of degree $8$ and exponent $2$ over a field of $2$-cohomological dimension $4$. Let $F$ be a field of characteristic different from $2$ and let us denote  $K=F(\sqrt{a})$. Let $B$ be a biquaternion algebra over $K$ with trivial corestriction, $\cor_{K/F}(B)=0$. In \cite{Bar}, it is associated with $B$ a degree three cohomological invariant $\delta_{K/F}(B)$ with value in $H^3(F,\mu_2)/\cor_{K/F}((K^\times)\cdot [B])$ where $H^3(F,\mu_2)$ is the third Galois cohomology group of $F$ with coefficients in $\mu_2=\{\pm 1\}$. It is also shown in \cite{Bar} that $B$ has a descent to $F$ (that is, $B=B_0\otimes_F K$ for some biquaternion algebra $B_0$ defined over $F$)  if and only if $\delta_{K/F}(B)=0$.

Now, let $D$ be a central simple algebra of degree $8$ and exponent $2$ over $F$ containing $K$ such that $K$ is not in a quaternion subalgebra of $D$. That also means $\delta_{K/F}(C_DK)\ne 0$ where $C_DK$ denotes the centralizer of $K$ in $D$. Consider the following \emph{Merkurjev extension} $\mathbb{M}$ of $F$:
\[
F=F_0\subset F_1\subset\cdots \subset F_{\infty}=\bigcup_iF_i =:\mathbb{M}
\]
where the field $F_{2i+1}$ is the maximal odd degree extension of $F_{2i}$; the field $F_{2i+2}$ is the composite of all the function fields  $F_{2i+1}(\pi)$, where $\pi$ ranges over all $4$-fold Pfister forms over $F_{2i+1}$. The arguments used by  Merkurjev in \cite{Mer92} show that the $2$-cohomological dimension $cd_2(\mathbb{M})\le 3$.  We have the following result:
\begin{theorem}\label{application}
The algebra $D$ and the field $\mathbb{M}$ are as above. The division algebra Brauer equivalent to
\[
D_{\mathbb M}\otimes_{\mathbb M}(a,t)_{\mathbb M(t)}
\]
is indecomposable of degree $8$ and exponent $2$ over $\mathbb M(t)$, where $t$ is an indeterminate. 
\end{theorem}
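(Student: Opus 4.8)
The plan is to deduce the statement from the general fact recorded just above (a consequence of Corollary~\ref{thm3.2}): if $D_0$ is a division algebra of degree $8$ and exponent $2$ over a field $F_0$ of characteristic different from $2$, and $F_0(\sqrt a)\subset D_0$ is a quadratic field extension lying in no quaternion $F_0$-subalgebra of $D_0$, then the division algebra Brauer equivalent to $D_0\otimes_{F_0}(a,t)_{F_0(t)}$ is indecomposable of degree $8$ and exponent $2$ over $F_0(t)$. I would apply this with $F_0=\mathbb M$ and $D_0=D_{\mathbb M}$, so the whole issue is to check that passing from $F$ to $\mathbb M$ preserves the hypotheses, namely: (a) $a\notin\mathbb M^{\times 2}$, and (b) $\mathbb M(\sqrt a)$ lies in no quaternion $\mathbb M$-subalgebra of $D_{\mathbb M}$. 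Condition (b) already carries the rest: if $D_{\mathbb M}$ had index at most $4$ then, being of degree $8$, it would contain a split quaternion $\mathbb M$-subalgebra and hence a copy of the quadratic field $\mathbb M(\sqrt a)$; by the Skolem-Noether Theorem the given copy of $\mathbb M(\sqrt a)$ in $D_{\mathbb M}$ would be conjugate to this one and would therefore also lie in a quaternion subalgebra, contradicting (b). So (b) forces $D_{\mathbb M}$ to be a division algebra of degree $8$ and, being nonsplit, of exponent $2$. Once (a) and (b) hold, the general fact gives the conclusion over $\mathbb M(t)$; and $cd_2(\mathbb M(t))=cd_2(\mathbb M)+1\le 4$ by Serre's computation of the cohomological dimension of a rational function field, since $cd_2(\mathbb M)\le 3$ and $\ch\mathbb M\neq 2$.

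Thus everything reduces to the persistence of (a) and (b) along the Merkurjev tower $F=F_0\subset F_1\subset\cdots\subset\mathbb M=\bigcup_i F_i$. Since $\mathbb M$ is a filtered union and ``$\mathbb M(\sqrt a)$ lies in a quaternion subalgebra of $D_{\mathbb M}$'' involves only finitely much data (a quaternion subalgebra together with an embedding of $\mathbb M(\sqrt a)$), it is enough to prove, step by step, that (a) and (b) over $F_i$ imply (a) and (b) over $F_{i+1}$. Here $F_{i+1}$ is either a maximal odd-degree extension of $F_i$ (a filtered union of finite odd-degree extensions) or a composite of function fields $F_i(\pi)$ of $4$-fold Pfister forms over $F_i$; in the latter case a further direct-limit argument reduces us to a single $F_i(\pi)$, and we may assume $\pi$ anisotropic (if $\pi$ is isotropic it is hyperbolic, so $F_i(\pi)/F_i$ is purely transcendental and nothing changes). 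Condition (a) is easy: a square root of $a$ generates a quadratic extension, which survives odd-degree extensions by the usual norm argument and survives $F_i(\pi)$ for $\pi$ anisotropic because then $F_i$ is algebraically closed in $F_i(\pi)$.

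For (b) I would use the degree-three cohomological invariant of \cite{Bar}. Set $B=C_D K$, a biquaternion division algebra over $K=F(\sqrt a)$ with $\cor_{K/F}[B]=0$. The centralizer of $F_i(\sqrt a)$ in $D_{F_i}$ is the base change $B_{F_i(\sqrt a)}$, again a biquaternion algebra with trivial corestriction, and by the criterion of \cite[Prop.~4.4]{Bar} the field $F_i(\sqrt a)$ lies in a quaternion $F_i$-subalgebra of $D_{F_i}$ if and only if $B_{F_i(\sqrt a)}$ admits a descent to $F_i$, that is, if and only if $\delta_{F_i(\sqrt a)/F_i}(B_{F_i(\sqrt a)})=0$. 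By functoriality of the invariant this class is the restriction of $\delta_{K/F}(B)$ to the corresponding quotient of $H^3(F_i,\mu_2)$, so (b) over $F_i$ says precisely that $\delta_{K/F}(B)$ stays non-zero there; by hypothesis it is non-zero over $F_0=F$, and I must check that no step of the tower kills it. For an odd-degree extension $F_{i+1}/F_i$ this is the standard restriction--corestriction argument: $\cor_{F_{i+1}/F_i}\circ\res_{F_{i+1}/F_i}$ is multiplication by the odd integer $[F_{i+1}:F_i]$, which is invertible on the relevant quotient of $H^3(F_i,\mu_2)$ (a group of exponent $2$), so restriction is injective there. For a function field $F_i(\pi)$ of an anisotropic $4$-fold Pfister form the key input is that $\res\colon H^3(F_i,\mu_2)\to H^3(F_i(\pi),\mu_2)$ has trivial kernel (the cohomological degree $3$ being smaller than $4$); on top of that one has to verify that the norm subgroup appearing in the denominator of the invariant over $F_i(\pi)$ is not large enough to swallow the restriction of $\delta_{K/F}(B)$.

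I expect this last point---the control, under a $4$-fold Pfister function field, of the denominator $\cor_{F_i(\pi)(\sqrt a)/F_i(\pi)}\!\big((F_i(\pi)(\sqrt a))^{\times}\cdot[B_{F_i(\pi)(\sqrt a)}]\big)$ of the invariant $\delta$---to be the main obstacle. This is exactly where the design of Merkurjev's tower is used: it is built only from $4$-fold Pfister function fields and odd-degree extensions, which both leave degree-three mod-$2$ cohomology essentially untouched while forcing $cd_2$ down to $3$, and the required control of the denominator should follow by an argument parallel to the one of \cite{Mer92}. Everything else is routine bookkeeping in the filtered union, together with the general fact proved above.
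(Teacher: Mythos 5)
Your overall strategy coincides with the paper's: reduce to the non-vanishing of the invariant $\delta$ over $\mathbb M$, then invoke Corollary~\ref{thm3.2} via the general fact recorded just before the theorem. Your preliminary reductions are sound and in fact more explicit than the paper's (the paper does not spell out that $D_{\mathbb M}$ must remain division, nor that $a\notin\mathbb M^{\times2}$, though both follow as you say). The difference is that the paper does not unwind the non-vanishing step along the tower at all; it cites \cite[Prop.~4.7]{Bar} together with a result of Merkurjev (Theorem~A.9 of \cite{Bar}) to conclude directly that the scalar-extension map
\[
\frac{H^3(F,\mu_2)}{\cor_{K/F}\big(K^{\times}\cdot[B]\big)}\;\longrightarrow\;\frac{H^3(\mathbb M,\mu_2)}{\cor_{\mathbb M(\sqrt a)/\mathbb M}\big(\mathbb M(\sqrt a)^{\times}\cdot[B_{\mathbb M(\sqrt a)}]\big)}
\]
is injective, whence $\delta_{\mathbb M(\sqrt a)/\mathbb M}(B)\neq0$.

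The point where your argument is genuinely incomplete is exactly the one you flag yourself. Killing the numerator is controlled by restriction--corestriction for odd-degree steps and by the triviality of $\ker\big(H^3(F_i,\mu_2)\to H^3(F_i(\pi),\mu_2)\big)$ for anisotropic $4$-fold Pfister forms, and that much you have. But the invariant lives in a \emph{quotient}, and the denominator $\cor\big((\,\cdot\,)^\times\cdot[B]\big)$ can grow under scalar extension: a priori $\res\,\delta_{K/F}(B)$ could become a corestricted symbol over $F_i(\pi)(\sqrt a)$ even though it is not one over $K$. Showing that this does not happen at each stage of the tower, and that it is compatible with the direct limit, is precisely the content of \cite[Prop.~4.7]{Bar} (for the descent along the tower) together with Merkurjev's theorem (Theorem~A.9 of \cite{Bar}) controlling the norm subgroup; ``should follow by an argument parallel to \cite{Mer92}'' is not a proof of it. Note also that even for the odd-degree steps the transfer argument needs to be run on the quotient, not just on $H^3$: you must check that the denominator over $F_{i+1}$ corestricts into the denominator over $F_i$, which is a small but nontrivial norm-principle verification that you pass over silently. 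Until these two points are supplied, the proposal reproduces the shape of the paper's proof but leaves its central step unproved.
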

\begin{proof}
Put $B=C_DK$. As observed in the proof of \cite[Thm. 1.3]{Bar} the $2$-cohomological dimension of $\mathbb{M}$ is exactly $3$. It follows from \cite[Prop. 4.7]{Bar} and a result of Merkurjev (see Theorem A.9 of \cite{Bar}) that the scalar extension map
\[
\frac{H^3(F,\mu_2)}{\cor_{K/F}((K^\times)\cdot [B])}\longrightarrow \frac{H^3(\mathbb M,\mu_2)}{\cor_{\mathbb M(\sqrt{a})/\mathbb M}(\mathbb M(\sqrt{a})^\times\cdot [B_{\mathbb M(\sqrt{a})}])}
\]
is an injection. So, $\delta_{\mathbb{M}(\sqrt{a})/\mathbb{M}}(B)\ne 0$ since $\delta_{K/F}(B)\ne 0$. Hence the extension  $\mathbb M(\sqrt{a})$ is not in a quaternion subalgebra of $D_{\mathbb M}$. Therefore the division algebra Brauer equivalent to $D_{\mathbb M}\otimes_{\mathbb M}(a,t)_{\mathbb M(t)}$ is an indecomposable algebra of degree $8$ and exponent $2$ over $\mathbb M(t)$ by Corollary \ref{thm3.2}; as desired.  
\end{proof}

\bigskip
\subsection*{Acknowledgements}
This work is a generalization of some results of my PhD thesis. I gracefully thank  my advisors A. Qu\'eguiner-Mathieu and J.-P. Tignol  for their support, ideas and for sharing their  knowledge. I thank A. S. Merkurjev for providing Corollary \ref{cor3.1} (a private communication to J.-P. Tignol).

%%%%%%%%%%%

\end{document}